\newcommand{\intav}[1]{\mathchoice {\mathop{\vrule width 6pt height 3 pt depth  -2.5pt
\kern -8pt \intop}\nolimits_{\kern -6pt#1}} {\mathop{\vrule width
5pt height 3  pt depth -2.6pt \kern -6pt \intop}\nolimits_{#1}}
{\mathop{\vrule width 5pt height 3 pt depth -2.6pt \kern -6pt
\intop}\nolimits_{#1}} {\mathop{\vrule width 5pt height 3 pt depth
-2.6pt \kern -6pt \intop}\nolimits_{#1}}}
 \newcommand{\Rr}{\mathbb R}
\newcommand{\rec}{\operatorname{rec}}
\newcommand{\epi}{\operatorname{epi}}
\newtheorem{teo}{Theorem}[section]
\newtheorem{example}{Example}
\newtheorem{Definition}{Definition}[section]
\newtheorem{Lemma}{Lemma}[section]
\newtheorem{Proposition}{Proposition}[section]
\newtheorem{Remark}{Remark}[section]
\newtheorem{Assumption}{A}
\begin{document}

\title{Sharp hessian integrability estimates for nonlinear elliptic equations: an asymptotic approach}
\author{Edgard A. Pimentel
\,\,and \,\,
Eduardo V. Teixeira
}

\date{\today} %%

\maketitle

\begin{abstract}

\noindent We establish sharp $W^{2,p}$ regularity estimates for viscosity solutions of fully nonlinear elliptic equations under minimal, asymptotic assumptions on the governing operator $F$. By means of geometric tangential methods, we show that if the {\it recession} of the operator $F$ -- formally given by $F^*(M):=\infty^{-1} F(\infty M)$ --  is convex, then any viscosity solution to the original equation $F(D^2u) = f(x)$ is locally of class $W^{2,p}$, provided $f\in L^p$, $p>d$, with appropriate universal estimates. Our result extends to operators with variable coefficients and in this setting they are new even under convexity of the frozen coefficient operator, $M\mapsto F(x_0, M)$, as oscillation is measured only at the recession level. The methods further yield BMO regularity of the hessian, provided the source lies in that space. As a final application, we establish the density of $W^{2,p}$ solutions within the class of all continuous viscosity solutions, for generic fully nonlinear operators $F$. This result gives an alternative tool for treating common issues often faced in the theory of viscosity solutions.
\medskip

\noindent \textbf{Keywords}:  Fully nonlinear elliptic equations; regularity theory; a priori $W^{2,p}$ estimates.

\medskip

\noindent \textbf{MSC(2010)}:  35D10, 35B65, 35J60.
\end{abstract}

%\thanks{
%E. Pimentel is financed by CNPq-Brazil.}

\section{Introduction}

In this article, we investigate interior $W^{2,p}$-regularity estimates for viscosity solutions of second order fully nonlinear elliptic equations
\begin{equation}\label{eq1}
	F(D^2u) \,= \,f(x).
\end{equation}
The key novelty of the present work is that $F\colon \mathcal{S}(d) \to \mathbb{R}$ is assumed to be convex (or concave) only at the ends of $\mathcal{S}(d)$, that is, only when $\|D^2u\| \approx \infty$. Under such a mild condition, we establish local $W^{2,p}$ estimates for solutions to \eqref{eq1} in terms of the $L^p$ norm of $f$. Important consequences for the general theory of fully nonlinear PDEs are then derived.

Regularity theory for viscosity solutions of second order fully nonlinear elliptic equations has been a central topic in the field of analysis of PDEs since the trailblazing works of N. Krylov and M. Safonov \cite{KS79,KS80} on Harnack inequality for non-divergence form (linear) elliptic equations, back at the beginning of the 1980's. In turn, solutions to homogeneous equations,
$F(D^2u) = 0,$ are of class $C^{1,\alpha}$, for some $0< \alpha< 1$. The next major chapter in the regularity theory of such equations comes a few years later when L. Evans \cite{Evans82} and N. Krylov \cite{Krylov82,Krylov83} proved separately  that solutions to convex (or concave) homogeneous equations are locally of class $\mathcal{C}^{2,\alpha}$.  Still under the (natural) assumption that $F$ is convex, L. Caffarelli, in his seminal paper \cite{MR1005611}, established the foundations of the $W^{2,p}$ theory for fully nonlinear elliptic equations.  

Whether $W^{2,p}$ regularity estimates were available for {\it any} uniformly elliptic fully nonlinear equation challenged the community for over thirty years. The problem was settled in the negative by N. Nadirashvili and S. Vl{\u{a}}du{\c{t}, see \cite{NV07,NV08, NV11}.  We also mention here the examples from \cite{MR3266252} of linear elliptic operators with piecewise constant coefficients whose solutions fail to be in $W^{2,p}$.

Sharp hessian integrability theory for Equation \eqref{eq1} is indeed an intricate mathematical puzzle. In turn, special hidden structures hold the key to the validity of $W^{2,p}$ {\it a priori} estimates. Often such hidden structures are not perceived by classical methods and techniques used in the study of PDEs, and alternative approaches must be considered. In this present work, we tackle this issue by means of the so-called geometric tangential analysis. This comprises a series of techniques relating a given problem to an auxiliary one through a genuinely geometric structure. The core of the geometric tangential analysis is to build a path that touches the original problem of interest and connects it with an auxiliary, model-problem. Among the diverse manners we have to build such a path, we bring up in this work the notion of recession function: given a fully nonlinear elliptic operator $F$ defined on $\mathcal{S}(d)$, we denote by $F^*$ the {\it limiting} operator
\begin{equation}\label{recession}
	F^*(M):=\lim_{\mu\to 0}\mu F(\mu^{-1}M),
\end{equation}
for $M\in\mathcal{S}(d)$. Such a limiting operator appears naturally in the study of free boundary problems ruled by fully nonlinear equations, where hessian blow-up is expected through the phase transition. Hence, the free boundary condition, that is, the equation satisfied along the free boundary is prescribed by the $F^{*}$ rather than $F$, see for instance \cite{RT, MR3067831}. In a related reasoning, {\it improved} $C^{1,\alpha}$ estimates based on limiting profiles of the operator have been recently announced in \cite{SilvTeix}.

Hereafter, the operator $F^*$ will be called the {\it recession} function associated with $F$.  This nomenclature is borrowed from the realm of convex analysis \cite{Rockafellar}, and can be found in applications of its techniques to numerous problems, ranging from Economics (e.g. production theory, see \cite{econ}) to Physics (e.g. continuum mechanics, see \cite{phys}). In this tradition, a convex set $A\subset\mathbb{R}^d$ is said to recedes in the direction of $y\in\mathbb{R}^d$, for $y\neq 0$, if 
\begin{equation}\label{conecond}
	x\,+\,\mu y\,\in\,A	
\end{equation}
for every $\mu\geq 0 $ and $x\in A$. The set of all vectors $y\in\mathbb{R}^d$ for which \eqref{conecond} is satisfied is called the \textit{recession cone} of $A$, denoted $\rec(A)$. Consider further a convex function $f:\Rr^d\to\Rr$ and denote its epigraph by $\epi(f)$. A straightforward reasoning yields that  $\rec(\epi(f))$ is on its turn the epigraph of a function, say $f^*$. Hence, $f^*$ is said to be the \textit{recession function} associated with $f$. It can be shown that, under a few conditions, we have
\begin{equation}\label{reccon}
	f^*(y)\,=\,\lim_{\mu\to 0}\mu\left(f\left(x+\mu^{-1} y\right)-f(x)\right);
\end{equation}
see \cite{Rockafellar}. By taking $x\equiv 0$ and assuming that $f(0)=0$, we recover \eqref{recession} through the limit in \eqref{reccon}.

Insofar as the heuristics of the geometric tangential methods are concerned, by making appropriate assumptions on $F^*$, we expect to import regularity from the ends of $\mathcal{S}(d)$ back to $F$. The tangential path in this case is parametrized by $\mu>0$. Our main result is stated in the following theorem:

%The recession function $F^*$ enables us to perform a fine analysis of the structures determining the regularity of the solutions to the original operator. Hence, it has played a significant role in establishing a variety of results on the regularity theory of PDEs. In \cite{MR3067831} the authors investigate fully nonlinear elliptic equations with singular absorption terms, and the associated free boundary problem. They establish sharp regularity properties of the solutions as well as geometric-measure properties of the free boundary. Moreover, the idea of recession function also appears in \cite{SilvTeix}, where the authors establish a priori local $\mathcal{C}^{1,Log-Lip}$ estimates for solutions of \eqref{eq1}, by assuming that $F^*$ is convex/concave.

%In the present paper we investigate a priori $W^{2,p}$ regularity for the solutions of \eqref{eq1} by assuming that the associated recession function $F^*$ has $\mathcal{C}^{1,1}$ estimates. Our main result is stated in the following theorem:

\begin{teo}[A priori $W^{2,p}$ estimate]\label{w2ptheorem}
Let $F\colon \mathcal{S}(d) \to \mathbb{R}$ be uniformly elliptic, $p>d$, and $u$ be a viscosity solution of 
$$
	F(D^2u)\,=\,f(x), \quad \text{in } B_1.
$$ 
Assume $F^{*}$ has {\it a priori} $C^{1,1}$ estimates. Then $u\in W^{2,p}(B_{1/2})$ and there exists $C  > 0$ so that
\begin{equation}\label{w2p est}
	\left\|u\right\|_{W^{2,p}(B_{1/2})}\,\leq\,C\left(\left\|u\right\|_{L^{\infty}(B_{1})}\,+\,\left\|f\right\|_{L^{p}(B_{1})}\right).
\end{equation}
\end{teo}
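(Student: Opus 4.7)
The argument follows the geometric tangential philosophy: extract a regular profile from the recession operator $F^{*}$, approximate $u$ by it, and convert the closeness into Hessian integrability through a Calder\'{o}n--Zygmund decomposition. The core analytic input is the following approximation lemma: given $\delta>0$, there exists $\epsilon_{0}=\epsilon_{0}(\delta)>0$ such that any viscosity solution $u$ of $F(D^{2}u)=f$ in $B_{1}$ with $\|u\|_{L^{\infty}(B_{1})}\le 1$ and $\|f\|_{L^{p}(B_{1})}\le\epsilon_{0}$ admits an approximant $h$ satisfying $F^{*}(D^{2}h)=0$ in $B_{3/4}$ and $\|u-h\|_{L^{\infty}(B_{3/4})}\le\delta$. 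This would be established by a combined contradiction--compactness--scaling argument: the tangential path $F_{\mu}(M):=\mu F(\mu^{-1}M)$ converges locally uniformly on $\mathcal{S}(d)$ to $F^{*}$ as $\mu\to 0$ by the very definition \eqref{recession}, so after rescaling $u$ along this path, Krylov--Safonov equicontinuity yields a subsequential H\"{o}lder limit; the stability of viscosity solutions under uniform operator convergence, together with the $1$-homogeneity of $F^{*}$ which permits the scaling to be undone, produces an $F^{*}$-harmonic limit contradicting the assumed separation.

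\textbf{Paraboloid measure decay and Calder\'{o}n--Zygmund iteration.} Since $F^{*}$ admits $C^{1,1}$ a priori estimates by hypothesis, the approximating profile $h$ is squeezed at every point of $B_{1/2}$ between two paraboloids of a universal opening $K_{0}$. The $L^{\infty}$-closeness of the approximation transfers this property to $u$ up to a controlled error. Following Caffarelli's measure lemma \cite{MR1005611}, one obtains, after suitable normalization, the single-scale decay
\[
\bigl|\{x\in B_{1/2}\,:\,\Theta(u,x)>K_{0}\}\bigr|\le\rho\,|B_{1/2}|,
\]
for a universal $\rho\in(0,1)$, where $\Theta(u,x)$ denotes the minimal opening of a paraboloid touching $u$ from both sides at $x$, provided $\|f\|_{L^{p}(B_{1})}$ is sufficiently small. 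A standard dyadic cube-splitting argument then amplifies this single-scale decay into the distribution function estimate
\[
\bigl|\{x\in B_{1/2}\,:\,\Theta(u,x)>t\}\bigr|\le C\,t^{-p}\bigl(\|u\|_{L^{\infty}(B_{1})}+\|f\|_{L^{p}(B_{1})}\bigr)^{p}.
\]
Since $|D^{2}u|$ is dominated almost everywhere by $\Theta(u,\cdot)$ on the set of paraboloid-touchable points, integrating in $t$ produces $D^{2}u\in L^{p}(B_{1/2})$ along with the estimate \eqref{w2p est}, once the initial normalization is undone.

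\textbf{Main obstacle.} The most delicate point is the approximation lemma. The recession operator $F^{*}$ enters only asymptotically through the tangential path $F_{\mu}$, so the scaling parameter $\mu$ must be calibrated together with the smallness parameter $\epsilon_{0}$ of the source in such a way that $F_{\mu}$ is uniformly close to $F^{*}$ on the compact subsets of $\mathcal{S}(d)$ on which the rescaled Hessians concentrate, while the $L^{\infty}$ normalization is undone in a manner that still yields a nontrivial approximation of $u$. The $1$-homogeneity of $F^{*}$ is crucial at this last step. Once this approximation is in place, the measure-decay argument and the Calder\'{o}n--Zygmund iteration are essentially structural adaptations of Caffarelli's classical $W^{2,p}$ scheme, with paraboloid control now inherited from $F^{*}$-harmonic functions rather than from $F$-harmonic ones.
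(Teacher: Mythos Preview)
Your overall strategy---compactness approximation by $F^{*}$-harmonic profiles along the tangential path $F_\mu$, followed by Caffarelli's paraboloid measure decay and a Calder\'on--Zygmund dyadic iteration---is exactly the paper's route (Lemma~\ref{luc}, Lemma~\ref{approx}, then Lemmas~\ref{lemma710}--\ref{lemma712}).

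There is, however, a genuine issue in how you formulate the approximation lemma. You state it for solutions of $F(D^{2}u)=f$ with small $\|f\|_{L^p}$, but the compactness argument you sketch cannot produce an $F^{*}$-harmonic limit from such a sequence: if $u_{j}$ solve $F(D^{2}u_{j})=f_{j}$ with $f_{j}\to 0$, stability of viscosity solutions gives $F(D^{2}u_{\infty})=0$, not $F^{*}(D^{2}u_{\infty})=0$, and these need not coincide. The rescaling you invoke does not repair this---sending $u_{j}\mapsto\mu_{j}u_{j}$ with $\mu_{j}\to 0$ drives the sequence to zero, and unscaling via the $1$-homogeneity of $F^{*}$ cannot then recover an approximant within $\delta$ of the original $u_{j}$. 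The paper's Lemma~\ref{approx} is instead stated for solutions of $F_{\mu}(D^{2}u)=f$ with \emph{both} $\mu<\epsilon$ and $\|f\|_{L^{p}}<\epsilon$; the contradiction sequence then carries $\mu_{j}\to 0$ built in, and the limiting operator is genuinely $F^{*}$.

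This is structural, not cosmetic: in the dyadic iteration the $k$-th rescaling $\tilde u(y)=M^{-k}u(x_{0}+2^{-i}y)$ solves $F_{M^{-k}}(D^{2}\tilde u)=\tilde f$, so the approximation lemma must apply uniformly across all $F_{\mu}$ with small $\mu$, not to $F$ itself. The bridge back to the original equation $F(D^2u)=f$ is an initial normalization $u\mapsto u/C$ for $C$ large, which converts $F$ into $F_{1/C}$ with $1/C$ already below the threshold. Your ``Main obstacle'' paragraph shows you sense this calibration issue; once the lemma is correctly placed at the $F_{\mu}$ level, the rest of your outline coincides with the paper's argument.
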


%The proof of Theorem \ref{w2ptheorem}, as well as the main assumptions under which we work, is detailed in Section \ref{main}.
 
Theorem \ref{w2ptheorem} accommodates a fairly general class of fully nonlinear operators $F$, as we shall exemplify when time comes. In turn, a large class of problems can be treated by the methods developed in this article. As a striking application of the results proven in this paper, we will verify that the set  of $W^{2,p}$ viscosity solutions is dense in the set of all continuous viscosity solutions of a given class of fully nonlinear equations, in a sense that will be made precise later. This fact enables one to bypass, in many cases, the formalism of the viscosity solution language, when addressing a priori regularity estimates, or any target property that is closed under uniform convergence. 

%Of particular interest, Theorem  \ref{w2ptheorem} implies, via embeddings, the main Theorem from \cite{SilvTeix}; see also the result proven in Section \ref{sct BMO}. 

%In addition, Theorem \ref{w2ptheorem} builds upon an iterative scheme introduced in \cite{MR3158810} to yield $p$-BMO estimates for the solutions of \eqref{eq1}, provided $f\in L^\infty$. This fact closely relates to the results presented in \cite{MR1978880}. %It indicates that the delicate analysis of these tangential methods reaches deeper layers of the regularity theory as well. 

The remainder of this paper is organized as follows. In section \ref{main}, we present the set-up under which we shall work in this paper. We also discuss the main ideas and insights concerning the proof of Theorem \ref{w2ptheorem}. In section \ref{sct example} we present few examples for which our results can be directly applied to. In section \ref{sct tools} we revisit  some tools and elements required in the study of hessian integrability estimates for solutions to non-divergence form equations. In section \ref{gtm} we discuss the idea of linking the regularity theory of the original operator $F$ to the (better) one of the recession operator $F^{*}$. This is done by an appropriate path along the set of all $(\lambda, \Lambda)$-elliptic equations. In section \ref{sct proof main thm} we deliver the proof of Theorem \ref{w2ptheorem} while in section \ref{sct var coeff} we discuss generalization to variable coefficient equations, $F(x, D^2u) = f(x)$. For the latter, we only measure the oscillation of the coefficients at the recession level, which may be strictly less than the oscillation of the coefficients of the original operator. Hence, Theorem \ref{w2,p for var coeff} gives a new information even in the classical setting where $M\mapsto F(x_0, M)$ is assumed to be convex. In section \ref{sct BMO} we address the borderline case $p=\infty$, that is, we provide BMO interior estimates for $D^2u$ in terms of BMO norm of the source $f$. Finally, in section \ref{sct density} we show our sharp integrability estimates can be applied to establish the density of $W^{2,p}$ solutions within the set of all $C^{0}$-viscosity solutions. 

\medskip
\noindent {\bf Acknowledgements}. This work was conducted during the authors' long term visit to the ICMC--Instituto de Ci\^encias Matem\'aticas e de Computa\c{c}\~ao, from Universidade de S\~ao Paulo in S\~ao Carlos. The authors thank the warm hospitality of that institute. EP has been partially supported by CNPq -- Brazil; ET thanks support from CNPq -- Brazil and FAPESP.

\section{Main assumptions and outline of the proof}\label{main}

In this section, we detail the main assumptions and set-up under which we shall work on the present paper. The space of all real, $d\times d$ symmetric matrices is denoted by $\mathcal{S}(d)$. This is a $\frac{d(d+1)}{2}-$dimensional space. It is a classical result that all matrices $M \in \mathcal{S}(d)$ are diagonalizable. For $M\in \mathcal{S}(d)$, we define 
$$
	\|M\| := \sum_{i=1}^d |e_i|,
$$
where $e_1, e_2, \cdots, e_d$ are the eigenvalues of $M$. We recall that any two norms in $\mathcal{S}(d)$ are equivalent. Hessians of $d$-dimensional $C^2$ functions, $D^2u$, lie in $\mathcal{S}(d)$ and in this article we are interested in second order nonlinear partial differential equations of the form $F(D^2u) = f(x),$ where $F\colon \mathcal{S}(d) \to \mathbb{R}$. Here is the main assumption upon the operator $F$, supposed to hold throughout the entire paper:

\begin{Assumption}\label{a1}
The operator $F$ is uniformly elliptic, with ellipticity constants $\lambda$, $\Lambda$. That is, for $M\in\mathcal{S}(d)$, we have
\[
	\lambda\left\|N\right\|\,\leq\,F(M+N)\,-\,F(M)\,\leq\,\Lambda\left\|N\right\|,
\]
for every $N\,\geq\,0$. We further assume, with no loss of generality, that $F(0) = 0$.
\end{Assumption}

An operator satisfying A\ref{a1} is also referred to as $(\lambda,\Lambda)$-elliptic. It is sometimes convenient to express ellipticity  in terms of the extremal Pucci operators:
$$
	\begin{array}{lll}
		\mathscr{P}^{-}_{\lambda, \Lambda}(M) &:=& \Lambda \cdot \text{Trace}(M^{-}) + \lambda \cdot \text{Trace}(M^{+}), \\
		\mathscr{P}^{+}_{\lambda, \Lambda}(M) &:=& \Lambda \cdot \text{Trace}(M^{+}) + \lambda \cdot \text{Trace}(M^{-}).
	\end{array}
$$
Assumption A\ref{a1} is equivalent to
$$
	\mathscr{P}^{-}_{\lambda, \Lambda}(M-N) \le F(M) - F(N) \le \mathscr{P}^{+}_{\lambda, \Lambda}(M-N),
$$
for all $M, \, N \in \mathcal{S}(d)$.  Under such a monotonicity assumption on $F$, the notion of viscosity solutions provides an appropriate definition of weak solutions for Equation \eqref{eq1}.

\begin{Definition}A continuous function $u \in C^0(B_1)$ is said to be a viscosity subsolution to \eqref{eq1} in $B_1$ if whenever one touches the graph of $u$ from above by a smooth function $\varphi$ at $x_0 \in B_1$ (i.e. $\varphi - u$ has a local minimum at $x_0$), there holds
$$
    F(D^2\varphi(x_0)) \ge f(x_0).
$$
Similarly, $u$ is a viscosity supersolution to \eqref{eq1} if whenever one touches the graph of $u$ from below by a smooth function $\phi$ at $y_0 \in B_1$, there holds
$$
    F(D^2\phi(y_0)) \le f(y_0).
$$
We say $u$ is a viscosity solution to \eqref{eq1} if it is a subsolution and a supersolution.
\end{Definition} 

The key goal of this present work is to show that sharp hessian integrability estimates for viscosity solutions to \eqref{eq1} is  controlled by the behavior of the operator $F$ at the ends of its space of definition. This brings us to the notion of recession function, which we formally define in the sequel.
\begin{Definition}[Recession function]\label{recession1}
The recession function $F^*(M)$ associated with the fully nonlinear operator $F$ is given by
$$
	F^*(M) := \lim_{\mu\to 0}F_\mu(M) := \lim_{\mu\to 0}\mu F(\mu^{-1}M).
$$ 
\end{Definition}

Notice that $F$, $F_\mu:= \mu F(\mu^{-1}M)$ and $F^*$ have the same ellipticity constants. It is also relevant to observe that the recession function is always homogeneous of degree one. The primary assumption we make in this article as to derive sharp $W^{2,p}$ estimates for viscosity solutions of \eqref{eq1} is that the recession operator $F^{*}$ has a rich enough {\it a priori} regularity theory.

\begin{Assumption}\label{a2}
We assume that the recession function $F^*$ associated with the operator $F$  exists  and has a priori $\mathcal{C}^{1,1}_{loc}$ estimates. That means $F^{*}(D^2h) = 0 $ in $B_1$ in the viscosity sense, implies $h\in C^{1,1}(B_{1/2})$ and
$$
	\|h\|_{C^{1,1}(B_{1/2}} \le C_* \|h\|_{L^\infty(B_1)},
$$
for a constant $C_{*} \ge 1$.
\end{Assumption}

In particular, if $F^*$ happens to be concave (or convex), A\ref{a2} is immediately satisfied due to Evans-Krylov Theorem.  This is the case when $F$ is suitably modified outside a ball of $\mathcal{S}(d)$. For instance, if $F$ is supposed to be concave (or convex) in $\mathcal{S}(d)\setminus B_R$, for some $R\gg 0$. This constitutes an important class of examples and we write it down for future references.

\begin{example}\label{example0} Let $F, G \colon \mathcal{S}(d) \to \mathbb{R}$ be uniformly elliptic operators with, say, $G$ homogeneous of degree one and convex. Assume $F(M) = G(M)$ for all $\|M\| \ge R$, for some $R\gg 1$. Then $F^{*}(M) = G (M)$.
\end{example}

We further comment that existence and uniqueness of the recession function is not {\it per se} required. By ellipticity, $\{F_\mu\}_{\{\mu>0\}}$ is locally pre-compact in $\mathcal{S}(d)$, hence, up to a subsequence, $F_\mu$ always converge to a recession function $F^{*}$. Assumption A\ref{a2} should be understood as a condition on {\it any} recession function of $F$.

Per the classical constraints on the notion of viscosity solutions, we will require that the source function $f$ is continuous in $B_1$.
\begin{Assumption}\label{a3}
We assume that $f\in C^0(B_1)$.
\end{Assumption}

Upon such a constraint, and in accordance to Caffarelli's theory, our $W^{2,p}$-regularity estimate is understood as an {\it a priori} estimate. By using weaker notions of viscosity solutions, it is possible to work under integrability assumption on $f$, c.f. \cite{CCKS}.  This is an immediate consequence of the stability properties of $L^p$ viscosity solutions. Besides, we expect that one can relax the integrability condition on $f$ in the sense of Escauriaza, see \cite{MR1237053}. 

\subsection{Outline of the proof}\label{main2}

Through the rest of this section, we will discuss the insights and main strategies for proving Theorem \ref{w2ptheorem}. Intuitively, $F^{*}$ ``governs" Equation \eqref{eq1} in the region where the hessian of $u$ blows-up. Hence, a priori $C^{1,1}$ estimates available for $F^{*}$ set a competing inequality which, in turn, should yield a better measure decay on 
$$
	\Theta_K := \ \{ x\text{ in } B_{1/2} : D^2u(x) > K  \},
$$
for  $K\gg 1$ sufficiently large. Of course, there are obvious difficulties in carrying out the above mentioned reasoning. For instance, in principle there is no information on the set where the hessian will be large, and in general  $\Theta_K$ are very irregular sets. We should also caution the readers to the existence of a $C^{1,1} \setminus C^2$ viscosity solution to a fully nonlinear elliptic equation, \cite{NV07}. Hence, it is not possible to establish continuity of the hessian out from the regularity theory of its recession function, even if $F^{*} = \Delta$. 

Alternatively, we will approach the problem with the aid of geometric tangential methods, where  $F^{*}$, along with its regularity theory, is regarded as a {\it target} profile. If we denote by $F_\mu := \mu F(\mu^{-1}M)$, the map $\mu \mapsto F_\mu$ provides a {\it tangential} path within the manifold consisting of $(\lambda, \Lambda)$-elliptic equations linking the regularity theory of $F_1 = F$ with the (better) one available for $F^{*}$.  
\begin{figure}[h!]
\scalebox{0.9} % Change this value to rescale the drawing.
{
\begin{pspicture}(0,-4.6849804)(13.674902,4.6849804)
\definecolor{color3217b}{rgb}{0.8,0.8,0.8}
\psbezier[linewidth=0.08](0.7014301,-0.7923486)(0.63701165,-0.66540885)(1.5752097,4.5550194)(6.9920535,4.449756)(12.408897,4.3444934)(12.877011,0.16512759)(12.838286,0.1971248)(12.7995615,0.229122)(10.097457,1.7971247)(8.97656,-1.4239278)(7.855662,-4.6449804)(8.547479,-0.30813837)(5.135265,0.36554584)(1.723051,1.0392301)(0.76584846,-0.9192884)(0.7014301,-0.7923486)
\psbezier[linewidth=0.02](0.73701173,-0.80498046)(1.8370117,-1.6449804)(5.777012,-1.6449804)(6.9370117,-0.6649805)
\psdots[dotsize=0.12](2.4370117,1.1150196)
\usefont{T1}{ptm}{m}{n}
\rput(2.2984667,0.88001955){$F$}
\psbezier[linewidth=0.04](2.4770117,1.1750195)(4.5460863,4.074108)(8.657012,4.6350193)(11.257011,1.6150196)
\usefont{T1}{ptm}{m}{n}
\rput(6.908467,3.1600196){$F_\mu$}
\usefont{T1}{ptm}{m}{n}
\rput(3.7892382,-2.2799804){manifold of $(\lambda, \Lambda)$-elliptic operators}
\rput{-42.0}(2.253172,8.146433){\psellipse[linewidth=0.02,dimen=middle,fillstyle=solid,fillcolor=color3217b](11.737678,1.1383597)(0.6901146,0.37888086)}
\psdots[dotsize=0.16,fillstyle=solid,dotstyle=o](11.777012,1.1350195)
\psline[linewidth=0.04cm,linestyle=dotted,dotsep=0.16cm](11.257011,1.6150196)(11.757011,1.1550195)
\usefont{T1}{ptm}{m}{n}
\rput(11.948467,0.9200195){$F^{*}$}
\psline[linewidth=0.02cm,tbarsize=0.07055555cm 5.0,arrowsize=0.05291667cm 2.0,arrowlength=1.4,arrowinset=0.4]{|->}(11.517012,1.5750195)(11.497012,3.7550194)
\usefont{T1}{ptm}{m}{n}
\rput(11.516465,4.1000195){$W^{2,p}$-regularity theory}
\psbezier[linewidth=0.002,linecolor=white,fillstyle=crosshatch*,hatchwidth=0.01,hatchangle=42.0,hatchsep=0.121199995](0.9370117,-0.8707513)(0.6970117,-0.78648007)(0.9970117,-0.3860518)(2.0570116,0.11448386)(3.1170118,0.61501956)(4.5570116,0.5267067)(5.5370116,0.19456957)(6.5170116,-0.13756756)(6.9370117,-0.6684893)(6.877012,-0.7063946)(6.817012,-0.74429995)(5.5570116,-1.4049804)(3.6770117,-1.376443)(1.7970117,-1.3479055)(1.1770117,-0.95502263)(0.9370117,-0.8707513)
\psline[linewidth=0.02cm,tbarsize=0.07055555cm 5.0,arrowsize=0.05291667cm 2.0,arrowlength=1.4,arrowinset=0.4]{|->}(5.6570115,0.57501954)(3.5570116,-2.1049805)
\usefont{T1}{ptm}{m}{n}
\rput(11.528242,4.5000196){Neighborhood with}
\psline[linewidth=0.12cm,arrowsize=0.05291667cm 2.0,arrowlength=1.4,arrowinset=0.4]{->}(6.697012,3.6150196)(6.9970117,3.6350195)
\end{pspicture} 
}
\vspace{-2cm}
\caption{Heuristics of the strategy to be implemented in the proof of Theorem \ref{w2ptheorem}. For $\mu = \mu(p)$ sufficiently small, but still positive, $F_\mu$ enters within a neighborhood of $F^{*}$ for which $W^{2,p}$ estimates are available. Such an improved estimate is transported down to the original operator $F = F_1$ through the tangential path.}
\end{figure}
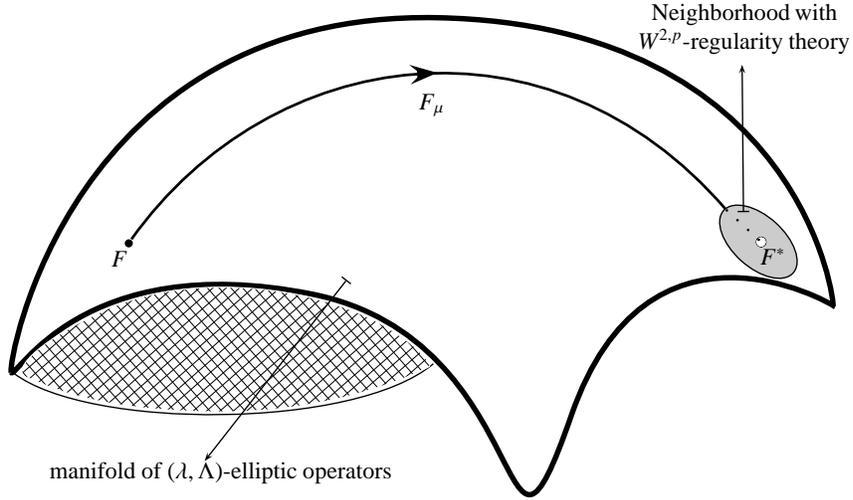

Within the heuristics of geometric tangential methods,  {\it universal} regularity theory should be understood as a relatively open property, in the sense that if $F^{*}$ has {\it a priori} $C^{1,1}_\text{loc}$ estimates then near $F^{*}$ operators should provide a ``slightly weaker" smoothness effect.  Hence, owing to $C^{1,1} \simeq W^{2,\infty}$ estimates for $F^{*}$, one should be able to reach $W^{2,p}$ regularity, $p<\infty$, for operators ``near" $F$. Hence, one should expect that for $0< \mu \le  \mu_0 \ll 1$, the operator $F_\mu$ belongs to such a neighborhood and then one may transfer such an estimate back to the original $F = F_1$.

In section \ref{gtm} we recur to compactness methods to prove an appropriate approximation lemma, which justifies rigorously the above discussion. Carrying out this analysis, with additional results and techniques from \cite{MR1005611}, we can deliver a proof of the following result:

\begin{Proposition}\label{prop1}
Let $u$ be a bounded viscosity solution of \eqref{eq1} and assume that A\ref{a1}-\ref{a3} hold. Suppose further that
$$
	\left\|u\right\|_{L^\infty(B_1)}\,\leq\,1\,\,\,\mbox{and}\,\,\,\left\|f\right\|_{L^P(B_1)}\,\leq\,\epsilon,
$$
for a small $\epsilon>0$. Then, $u\in W^{2,p}(B_{1/2})$ and 
$$
	\left\|u\right\|_{W^{2,p}(B_{1/2})}\,\leq\,C,
$$
where $C>0$ depends on the dimension $d$, $F$ and a priori $C^{1,1}$ estimates for $F^{*}$.
\end{Proposition}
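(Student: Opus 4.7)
The plan is to run the Calder\'on--Zygmund-type scheme of Caffarelli \cite{MR1005611} for $W^{2,p}$ estimates, with the $C^{1,1}$ estimates for $F^{*}$ from A\ref{a2} playing the role that the Evans--Krylov theorem plays when $F$ is itself convex. The two theories are linked through the tangential path $\mu\mapsto F_\mu=\mu F(\mu^{-1}\,\cdot\,)$.

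The core ingredient is an approximation lemma, to be proved in Section \ref{gtm}: for each $\delta>0$ there exists $\epsilon_0=\epsilon_0(\delta)>0$ so that, whenever $u$ satisfies the hypotheses of the proposition with $\|f\|_{L^p(B_1)}\le\epsilon_0$, one can produce $h$ on $B_{9/10}$ with $\|D^2 h\|_{L^\infty(B_{9/10})}\le K$ (universal) and $\|u-h\|_{L^\infty(B_{9/10})}\le\delta$. I would establish it by compactness along the tangential path: set $v:=\mu u$ for a small $\mu>0$, so that $F_\mu(D^2 v)=\mu f$ and $\|v\|_{L^\infty}\le\mu$. Since $F_\mu\to F^{*}$ locally uniformly as $\mu\downarrow 0$ and $F^{*}$ has $C^{1,1}$ estimates, the viscosity solution $w$ of $F^{*}(D^2 w)=0$ in $B_{9/10}$ with $w=v$ on $\partial B_{9/10}$ satisfies $\|D^2 w\|_{L^\infty}\le C_{*}\mu$, while stability of viscosity solutions together with the $(\lambda,\Lambda)$-ABP estimate, applied to the source $\mu f$ and to the operator defect $F_\mu-F^{*}$ evaluated on Hessians bounded by $C_{*}\mu$, forces $\|v-w\|_{L^\infty}\le \mu\delta$ provided $\mu$ and $\epsilon_0$ are small. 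Setting $h:=\mu^{-1}w$ and using the $1$-homogeneity of $F^{*}$ completes the lemma.

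Given the approximation, the rest follows Caffarelli's blueprint closely. Introducing the paraboloid-opening function
\[
\Theta(u,x):=\inf\bigl\{M:u\text{ is touched at }x\text{ from above and below, in a neighborhood, by paraboloids of opening }M\bigr\},
\]
the $\delta$-proximity of $u$ to a function $h$ with $\|D^2 h\|_{L^\infty}\le K$, combined with the extremal Pucci inequalities, the ABP maximum principle, and the cube decomposition recalled in Section \ref{sct tools}, yields an initial density estimate of the form $|\{\Theta>M_0\}\cap B_{1/2}|\le(1-\tau)|B_{1/2}|$ for a universal $\tau>0$. Iterating this dyadically gives the geometric decay
\[
|\{\Theta>M_0^k\}\cap B_{1/2}|\,\le\, C\sigma^k,\qquad k\ge 1,
\]
with $\sigma=\sigma(\delta)$ arbitrarily small upon taking $\delta$ small. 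Choosing $M_0$ so that $\sigma M_0^p<1$, summation of the resulting geometric series produces $\|\Theta\|_{L^p(B_{1/2})}\le C$, which is equivalent to $\|u\|_{W^{2,p}(B_{1/2})}\le C$ by standard identifications.

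The decisive obstacle lies in the approximation lemma. The convergence $F_\mu\to F^{*}$ is a priori only qualitative, whereas the argument requires a quantitative balance between $\mu$, $\epsilon_0$ and $\delta$; the delicate point is that the $C^{1,1}$-control on $w$ at the $v$-scale carries an extra factor $\mu$ which must survive the division by $\mu$ when one passes to $h=\mu^{-1}w$. What makes this transfer possible is the combination of the $1$-homogeneity of $F^{*}$, the local pre-compactness of $\{F_\mu\}_{\mu>0}$ under uniform $(\lambda,\Lambda)$-ellipticity, and the stability of viscosity solutions --- and this is precisely where assumption A\ref{a2} is used in an essential way.
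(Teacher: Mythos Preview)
Your overall architecture is right and matches the paper: an approximation lemma linking solutions to $F^{*}$-harmonic functions, followed by Caffarelli's Calder\'on--Zygmund scheme. The gap is in your proof of the approximation lemma. The scaling $v=\mu u$ is circular: since $F^{*}$ is $1$-homogeneous, solving $F^{*}(D^2 w)=0$ with $w=v=\mu u$ on the boundary yields exactly $w=\mu h$, where $h$ solves $F^{*}(D^2 h)=0$ with $h=u$ on the boundary; thus $h=\mu^{-1}w$ is independent of $\mu$ and no new smallness has been introduced. Concretely, the defect $(F_\mu-F^{*})(M)$ on the set $\{\|M\|\le C_{*}\mu\}$ is only $O(\mu)$ with a \emph{fixed} constant --- both operators vanish at $0$ and are $\Lambda$-Lipschitz, so at best $|(F_\mu-F^{*})(M)|\le 2\Lambda\|M\|\le 2\Lambda C_{*}\mu$ --- and is \emph{not} $o(\mu)$. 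After ABP and division by $\mu$ you only get $\|u-h\|_{L^\infty}\le C(\epsilon_0+2\Lambda C_{*})$, which cannot be forced below an arbitrary $\delta$. For a concrete instance take $F(t)=t+\arctan t$ in dimension one: then $F^{*}(t)=t$ and the defect at $t=C_{*}\mu$ equals $\mu\arctan C_{*}$, of exact order $\mu$.

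The paper's resolution is to change the \emph{statement} of the approximation lemma rather than to scale the solution. One assumes that $u$ solves $F_\mu(D^2 u)=f$ with $\|u\|_{L^\infty}\le 1$ and with both $\|f\|_{L^p}$ and $\mu$ small; the conclusion is $\|u-h\|_{L^\infty(B_{1/2})}\le\delta$ for an $F^{*}$-harmonic $h$. Now $\|D^2 h\|$ is of order $1$ (not $\mu$), and the local uniform convergence $F_\mu\to F^{*}$ on the fixed compact $\{\|M\|\le C_{*}\}$ is precisely what delivers smallness; the paper establishes this by a compactness/contradiction argument (Lemma~\ref{approx}). The small parameter $\mu$ is not inserted by hand but is produced by the Calder\'on--Zygmund iteration itself: at step $k$ the rescaled function $\tilde u(y)=M^{-k}2^{2i}u(x_0+2^{-i}y)$ solves $F_{M^{-k}}(D^2\tilde u)=\tilde f$, and $M^{-k}$ is eventually below the threshold (see the passage through Lemmas~\ref{lemma711}--\ref{lemma712}). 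Once you reformulate the lemma this way, the rest of your outline goes through essentially as written.
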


Routine arguments combined with the conclusion of Proposition \ref{prop1} yield a proof of Theorem \ref{w2ptheorem}.

\begin{Remark}
The $W^{2,p}$ estimate from Theorem \ref{w2ptheorem} depends not only on universal constants, but actually on {\it a priori} $C^{1,1}$ estimates for $F^{*}$ and on the modulus of convergence $F_\mu \to F^{*}$. Namely, in Lemma \ref{luc}, to be later presented,  we can define $\omega \colon (0,1) \to \mathbb{R}_{+}$ as
$$
	\omega(\epsilon) := \sup\left \{ \mu_0 > 0 : \left | \mu F(\mu^{-1}M)\,-\,F^*(M)\right | \,\leq\,\epsilon\left(1+\left\|M\right\|\right), \forall M \in \mathcal{S}(d), \, \forall \mu \le \mu_0 \right \}.
$$
The constant $C>0$ appearing in $W^{2,p}$ estimate \eqref{w2p est} depends on dimension, $\lambda$, $\Lambda$,  $C_*$ and $\omega$. 
\end{Remark}
%%%%%%%%%%%%%

\section{Further examples}\label{sct example}

Problems in differential geometry have been a profitable source of examples of fully nonlinear partial differential equations. Let $S\subset \mathbb{R}^{d+1}$ be a hypersurface and let $A$ denote its second fundamental form. Notable examples of fully nonlinear elliptic equations arising in differential geometry are of the form
\[
	F(A)\,=\,F(\kappa_1,...,\kappa_d)\,=\,\phi,
\]
where the eigenvalues $\kappa_1,...,\kappa_d$ of $A$ are the principal curvatures of $S$, and $\phi$ is a given function. When $\phi\,>\,0$ and $F$ is symmetric, $S$ is said to be a Weingarten surface. Specializing $F$ to be an $r$-th elementary symmetric function $\sigma_r$ and setting $\phi\equiv 1$ recovers important geometric quantities. For example, up to a constant, $r=1$ gives the mean curvature, $r=2$ yields the scalar curvature, and $r=d$ gives the Gauss-Kronecker curvature of $S$. See \cite{MR1404014}

Another remarkable class of fully nonlinear equations appears in the study of certain absolutely area minimizing submanifolds of $\mathbb{C}^d\approx\Rr^{2d}$. These are referred as special Lagrangian manifolds and arise, for example, in calibrated geometry. These objects retain an intrinsic connection with fully nonlinear elliptic PDEs, through what is called the special Lagrangian equation
\begin{equation}\label{SL}
	\mathcal{L}(M) := \,\sum_{i=1}^d\arctan \lambda_i = \Theta.
\end{equation}
If  $M=(x,\nabla u(x))$ is a minimal surface in $\Rr^{2d}$, then, it is a special Lagrangian manifold and intrinsically geometric information about $M$ may depend on the regularity of the hessian of $u$. It is known that the level set of \eqref{SL} are convex if and only if $|\Theta| \ge (d-2)\pi/2$, see \cite{Yuan1}. When $d=3$ and $|\Theta| < \pi/2$, given any $\delta>0$, the authors in \cite{WY} build up solutions to \eqref{SL} that are not in $C^{1,\delta}$.

While the special Lagrangian equation is not uniformly elliptic, it behaves like so for $C^{1,1}$ solutions. Hence, it is natural to inquire what is the smoothing effect of $\mathcal{L}$ when added small increment diffusions, say $\mathcal{L}_\epsilon := \mathcal{L} + \epsilon \Delta$.

\begin{example}[Perturbation of the special Lagrangian equation] \label{example3} Let $0<\alpha_1, ~\alpha_2, \cdots, \alpha_d < +\infty$ and consider 
$$
	F(M) := \,\sum_{i=1}^d\left( \alpha_i \lambda_i + \arctan \lambda_i \right ).
$$
This is a uniformly elliptic operator, and one easily computes
$$
	F^{*}(M) =  \alpha_i \lambda_i;
$$
i.e., the recession function of the perturbed special Lagrangian operator is, up to a change of variables, the laplacian operator. From Theorem \ref{w2ptheorem}, such operator has {\it a priori} $W^{2,p}$ estimates. In view of the counterexamples from \cite{WY}, these estimates cannot be uniform with respect to $|(\alpha_1, \cdots, \alpha_d)|$. Nonetheless, it follows that any (possibly singular) solution of \eqref{SL} can be approximated by $W^{2,p}$ solutions of small perturbations of the original operator.
\end{example}
%\begin{Assumption}\label{a3}
%Fix $R>0$. We assume that $F(M)$ is concave in $M$, in  $\mathcal{S}(d)\setminus B_R$. 
%\end{Assumption}

%In brief, Example \ref{example3} indicates that information pertaining purely to the realm of Differential Geometry can be unveiled by the techniques within the geometric tangential methods - in this case, the notion of recession function.

Regularity estimates based on the analysis of recession functions turn out to be an efficient tool for analyzing perturbation of geometric equations. In the sequel, we analyze a few further examples for which our main theorem can be applied.

\begin{example}\label{example1}
Let $q\in2\mathbb{N}+1$ be an odd number. Consider the eigenvalue  ``$q$-momentum" operator
\[
	 F_q(M)\,=\,F_q(\lambda_1,\cdots,\lambda_d)\,=\,\sum_{i=1}^d\left(1+  \lambda_i^q\right)^{1/q} - d,
\]
where $\lambda_1,\cdots,\lambda_d$ are the eigenvalues of the matrix $M\in\mathcal{S}(d)$. Notice that $F_q$ is neither concave nor convex. However, one easily computes
\[
	\mu F_q\left(\mu^{-1}M\right)\,=\,\sum_{i=1}^d\left(\mu^q+  \lambda_i^q\right)^{1/q} - \mu d.
\]
Thus,
\[
	F_q^*(M)\,=\,\lim_{\mu\to 0}\mu F_q\left(\mu^{-1}M\right)\,=\,\sum_{i=1}^d \lambda_i,
\]
i.e., $F_q^*$ is the laplacian operator,  for any $q\in2\mathbb{N}+1$. One easily observes that the recession convergence above is uniform in $q$; not ellipticity though. The limiting operator obtained as $q\to \infty$,
$$
	F_\infty(M) := \left \{ 
				\begin{array}{cll}
					0 &\text{for}& \|M\| <  1\\
					\text{Trace}(M) & \text{for}& \|M\| \ge 1,
				\end{array} 
			\right.
$$
prescribes no equation within the unit ball $B_1$. It would be interesting, in future research, to investigate whether $W^{2,p}$-regularity estimate provided by Theorem \ref{w2ptheorem} is uniform in $q$. 
\end{example}

Another illustratative example we bring up here concerns equations with trigonometric oscillatory dependence:

\begin{example}  \label{example4} Let $0<\alpha_1, ~\alpha_2, \cdots, \alpha_d < +\infty$ and consider 
$$
	F(M) := \,\sum_{i=1}^d\left( (1+\alpha_i) \lambda_i + \sin \lambda_i \right ).
$$
This is a uniformly elliptic operator, with ellipticity constants $\lambda = \inf \alpha_i$ and $\Lambda = \sup \alpha_i + 1$. We compute
$$
	F^{*}(M) = (1+\alpha_i) \lambda_i,
$$
i.e., modulus a change of coordinates, $F^{*}$ is the laplacian operator. 
\end{example}

%%%%%%%%%%%%%

\section{Some geometric-measure tools} \label{sct tools}

In this section, we gather few tools and elements involved in the proof of Theorem \ref{w2ptheorem}. Throughout, we revisit the by now well-established a priori $W^{2,\delta}$ estimates for solutions to \eqref{eq1}.
% and present a series of building block lemmas of the $W^{2,\delta}$ regularity theory which will play an important role in the proof of Proposition \ref{prop1}. 
Although these are well-known facts, they are presented here for completeness. Most of the proofs are omitted in what follows; the reader is referred to \cite{MR1005611} and \cite[Chapter 7]{ccbook}, where the complete arguments are presented.

\begin{Proposition}[Proposition 7.4, \cite{ccbook}]\label{prop2}Assume that $u\in S(f)$ in $B_1$ and  A\ref{a1} and A\ref{a3} hold. Then, for some universal number $\delta>0$, we have $u\in W^{2,\delta}(B_{1/2})$ and 
$$
	\left\|u\right\|_{W^{2,\delta}(B_{1/2})}\leq C\left(\left\|u\right\|_{L^\infty(B_1)}+\left\|f\right\|_{L^p(B_1)}\right),
$$
where $C>0$ is also a universal constant.
\end{Proposition}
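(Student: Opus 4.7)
The plan is to control the distributional size of the \emph{paraboloid opening} function
$$
\Theta(u,x) := \inf\bigl\{M>0 : \exists\, P^{\pm}_{M}\text{ paraboloids of opening } M \text{ touching } u \text{ from above/below at } x\bigr\},
$$
and then invoke an Aleksandrov-type fact that $D^{2}u$ exists a.e.\ in $\{\Theta<\infty\}$ with $|D^{2}u(x)|\leq C_{d}\Theta(u,x)$. Once we have $\||\Theta\|_{L^{\delta}(B_{1/2})}\leq C(\|u\|_{L^{\infty}(B_{1})}+\|f\|_{L^{p}(B_{1})})$ for some universal $\delta>0$, the estimate for $\|u\|_{W^{2,\delta}(B_{1/2})}$ follows, provided one also controls $\|u\|_{L^{\infty}}$ and $\|\nabla u\|_{L^{\delta}}$; the gradient bound comes for free by interpolation with the $C^{1,\alpha}$-regularity of solutions in the Pucci class.

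By linearity one may normalize and assume $\|u\|_{L^{\infty}(B_{1})}\leq 1$ and $\|f\|_{L^{p}(B_{1})}\leq\varepsilon_{0}$ for a small universal $\varepsilon_{0}$. The first key step is a \emph{measure decay estimate}: there exist universal constants $M_{0}>1$ and $\mu\in(0,1)$ such that, for any $u\in S(f)$ in $B_{1}$ with $\inf_{B_{1/4}}u\leq 1$, one has $\bigl|\{\Theta(u,\cdot)\leq M_{0}\}\cap Q_{1}\bigr|\geq\mu\,|Q_{1}|$, where $Q_{1}$ is a conveniently chosen dyadic cube contained in $B_{1/2}$. This is established by constructing an explicit radial barrier $\varphi$ of the form $\varphi(x)=A(|x|^{-\gamma}-c)$, suitably cut off, with $\mathscr{P}^{+}_{\lambda,\Lambda}(D^{2}\varphi)\leq 0$ in an annulus, and then applying the Aleksandrov--Bakelman--Pucci (ABP) estimate to $u+\varphi$ to produce a set of positive density where $u$ is below a paraboloid, with an analogous argument below.

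Next I iterate this estimate along the standard Calder\'on--Zygmund dyadic cube decomposition of Caffarelli--Cabr\'e. Namely, let
$$
A_{t}:=\{x\in Q_{1}:\Theta(u,x)>t\},\qquad B_{t}:=A_{t}\cup\bigl\{x\in Q_{1}: \mathcal{M}(f^{p})(x)>(c\,t)^{p}\bigr\}.
$$
The density estimate combined with the Calder\'on--Zygmund covering lemma yields $|A_{M_{0}t}|\leq(1-\mu)\,|B_{t}|$ for all $t\geq 1$. Iterating on $t=M_{0}^{k}$ and using the strong-type $(p,p)$ bound for the maximal function, one deduces
$$
\bigl|\{x\in Q_{1}:\Theta(u,x)>M_{0}^{k}\}\bigr|\leq(1-\mu)^{k}+C\sum_{j=0}^{k-1}(1-\mu)^{k-1-j}M_{0}^{-jp}\|f\|_{L^{p}}^{p}.
$$
Choosing $\delta>0$ universal with $M_{0}^{\delta}(1-\mu)<1$ and summing the geometric series gives $\|\Theta(u,\cdot)\|_{L^{\delta}(Q_{1})}\leq C$, hence after covering $B_{1/2}$ by finitely many such cubes and scaling back, the asserted $W^{2,\delta}$ bound.

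The main obstacle is the ABP-based density estimate, because one must construct a barrier whose extremal operator produces the right sign and whose contribution to the ABP inequality is controllable by $\|f\|_{L^{d}}$; the rest of the argument is essentially a bookkeeping iteration once this decay is in hand. I would therefore organize the proof as: (i) recall the Aleksandrov theorem $|D^{2}u|\leq C\Theta$ a.e.; (ii) construct the radial barrier and carry out the ABP step to obtain the single-scale density estimate; (iii) run the Calder\'on--Zygmund iteration to convert single-scale density into a power-law distributional decay; (iv) integrate to obtain the $L^{\delta}$ bound and conclude.
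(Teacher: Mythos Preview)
Your outline is essentially the paper's own argument (which in turn follows Caffarelli--Cabr\'e, Chapter~7): define the opening function $\Theta$, obtain a single-scale density estimate for $G_{M}$ via a barrier and ABP, iterate through the Calder\'on--Zygmund cube decomposition to reach the power-law decay $|A_{t}\cap Q_{1}|\le c_{2}t^{-\mu}$, and then sum to conclude $\Theta\in L^{\delta}$ with $\delta=\mu/2$. One correction: functions in the Pucci class $S(\lambda,\Lambda,f)$ are in general only $C^{\alpha}$, not $C^{1,\alpha}$, so you cannot appeal to $C^{1,\alpha}$-regularity for the gradient term; instead, observe that for $x\in G_{M}(u,B_{1})$ the global touching paraboloids give $|\nabla u(x)|\le C(\|u\|_{L^{\infty}}+M)$, whence $\|\nabla u\|_{L^{\delta}}$ is controlled by $\|u\|_{L^{\infty}}+\|\Theta\|_{L^{\delta}}$.
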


Proposition \ref{prop2} is paramount in the study of a decay rate for the measure of a certain family of subsets of $B_1$. We proceed by presenting a definition.

\begin{Definition}[Chapter 7, pp. 62, \cite{ccbook}]\label{ganda}
Let $\Omega\subset\Rr^d$ be a bounded domain and $u\in\mathcal{C}(\Omega)$. For $M>0$ and $H\subset\Omega$, we define
\begin{align*}
	\underline{G}_{M}(u,H)=\left\{ x_0\in H\,|\,\exists P\,\mbox{concave paraboloid touching $u$ from below at $x_0$}\right\}.
\end{align*}
Analogously, we can define $\overline{G}_{M}$:
\begin{align*}
	\overline{G}_{M}(u,H)=\left\{ x_0\in H\,|\,\exists P\,\mbox{convex paraboloid touching $u$ from above at $x_0$}\right\}.
\end{align*} 
In addition, we have 
$$
	\underline{A}_M(u,H)\doteq H\setminus \underline{G}_M(u,H)$$and $$\overline{A}_M(u,H)\doteq H\setminus \overline{G}_M(u,H).
$$
Finally, 
$$
	G_M(u,H)=\underline{G}_{M}(u,H)\cap\overline{G}_{M}(u,H),
$$
and
$$
	A_M(u,H)=\overline{A}_M(u,H)\cup \underline{A}_M(u,H).
$$
\end{Definition}

To prove that $D^2u$ belongs to  $L^p(B_{1/2})$, we have to study the summability of 
$$
	\sum_{k=1}^\infty M^{pk}\left|A_{M^k}(B_{1/2})\right|.
$$ 
This is due to the following well-known result on measure theory:

\begin{Lemma}\label{lemma1}Let g be a nonnegative and measurable function on $\Omega$ and $\mu_g$ be its distribution function, i.e.,
$$
	\mu_g(t)\,=\,\left|\left\{x\in\Omega\,|\,g(x)\,>\,t\right\}\right|,\;\;\;\;\;\;t>0.
$$ 
Let $\eta>0$ and $M>1$ be constants. Then, for $0<p<\infty$,
$$
	g\in L^p(\Omega)\,\Longleftrightarrow\,\sum_{k=1}^\infty M^{pk}\mu_g\left(\eta M^k\right):=S<\infty
$$
and 
$$
	C^{-1}S\leq\left\|g\right\|_{L^p(\Omega)}^p\leq C\left(\left|\Omega\right|+S\right),
$$
where $C\,>\,0$ is a constant depending only on $\eta$, $M$ and $p$.
\end{Lemma}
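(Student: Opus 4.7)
My plan is to prove Lemma~\ref{lemma1} by combining the layer-cake (distribution function) identity with a dyadic-type decomposition of $(0,\infty)$ along the geometric sequence $\{\eta M^k\}_{k\in\mathbb{Z}}$.

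The starting point will be the classical identity
$$
	\left\|g\right\|_{L^p(\Omega)}^p \,=\, p\int_0^\infty t^{p-1}\mu_g(t)\,dt,
$$
valid for any nonnegative measurable $g$ and $0<p<\infty$. I will then split the integration range as
$$
	\int_0^\infty t^{p-1}\mu_g(t)\,dt \,=\, \int_0^{\eta}t^{p-1}\mu_g(t)\,dt \,+\, \sum_{k=0}^{\infty}\int_{\eta M^k}^{\eta M^{k+1}}t^{p-1}\mu_g(t)\,dt.
$$
For the first piece I will use the trivial bound $\mu_g(t)\leq |\Omega|$, which yields a contribution of order $\eta^p|\Omega|/p$; this is where the additive term $|\Omega|$ on the right-hand side comes from.

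For each dyadic annulus $[\eta M^k,\eta M^{k+1}]$, I will exploit the monotonicity of $\mu_g$. To get the \emph{upper} bound on $\|g\|_{L^p(\Omega)}^p$, I bound $\mu_g(t)\leq \mu_g(\eta M^k)$ on the annulus, and compute explicitly
$$
	\int_{\eta M^k}^{\eta M^{k+1}}t^{p-1}\,dt \,=\, \frac{\eta^p}{p}(M^p-1)M^{pk},
$$
which, summed in $k\geq 0$, produces a constant multiple of $S$ plus the $|\Omega|$ contribution from the $[0,\eta]$ piece. For the \emph{lower} bound, I shift the index and bound $\mu_g(t)\geq \mu_g(\eta M^{k})$ on the annulus $[\eta M^{k-1},\eta M^{k}]$, obtaining
$$
	\int_{\eta M^{k-1}}^{\eta M^{k}}t^{p-1}\mu_g(t)\,dt \,\geq\, \mu_g(\eta M^k)\cdot\frac{\eta^p}{p}(1-M^{-p})M^{pk},
$$
and summing in $k\geq 1$ shows $\|g\|_{L^p(\Omega)}^p \geq c(\eta,M,p)\,S$. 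Together these give both the equivalence $g\in L^p(\Omega)\Longleftrightarrow S<\infty$ and the quantitative two-sided estimate, with the constant $C$ depending only on $\eta$, $M$ and $p$ through the explicit factors $\eta^p$, $M^p-1$ and $1-M^{-p}$.

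There is no serious obstacle here: the only mild technical point is keeping track of the shift between $\mu_g(\eta M^k)$ and $\mu_g(\eta M^{k+1})$ across the two directions of the inequality (which costs only a factor $M^p$ that is harmless, as it is absorbed in $C$), and isolating the initial piece $[0,\eta]$ so that it produces the additive $|\Omega|$ term rather than corrupting the sum $S$.
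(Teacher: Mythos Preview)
Your argument is correct and is precisely the standard layer-cake / geometric-decomposition proof of this lemma. The paper itself does not give a proof at all: it simply refers the reader to \cite[Lemma~7.3]{ccbook}, so there is no ``paper's own proof'' to compare against beyond noting that your approach is the one underlying that reference.
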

\begin{proof}
For the proof, we refer the reader to \cite[Lemma 7.3]{ccbook}.
\end{proof}

\begin{Lemma}\label{lemma2} Assume that $\Omega$ is a bounded domain such that $B_{6\sqrt{d}}\subset\Omega$. Let $u$ be a continuous function in $\Omega$, with $\left\|u\right\|_{L^\infty(\Omega)}\leq 1$, $u\in\overline{S}(f)$ in $B_{6\sqrt{d}}$. Assume further that $\left\|f\right\|_{L^p(B_{6\sqrt{d}})}\leq \delta_0$. Then, $$\left|\underline{G}_M(u,\Omega)\cap Q_1\right|\,\geq\,1-\sigma,$$where $0<\,\sigma\,<1$, $\delta_0>0$ and $M>1$ are universal constants.
\end{Lemma}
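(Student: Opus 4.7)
The plan is to follow the classical approach of Caffarelli--Cabré for density estimates of this type, combining a carefully constructed radial barrier with the Alexandroff--Bakelman--Pucci (ABP) maximum principle. Since the conclusion is a quantitative lower bound on the measure of the set where $u$ admits a concave paraboloid of opening $M$ touching from below, the idea is to manufacture an auxiliary function whose convex envelope has a large contact set, and then transfer this information back to $u$.

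First I would construct a smooth radial barrier $\varphi\colon \mathbb{R}^d \to \mathbb{R}$, depending only on dimension and ellipticity, satisfying
\begin{itemize}
\item $\varphi \le -2$ on $\overline{Q_3}$,
\item $\varphi \ge 0$ on $\mathbb{R}^d \setminus B_{2\sqrt{d}}$,
\item $\mathscr{P}^{-}_{\lambda,\Lambda}(D^2\varphi) \le C\,\chi_{B_{1/4}}$ in the viscosity sense,
\end{itemize}
for a universal constant $C>0$. The standard choice is something of the form $\varphi(x) = A\bigl(|x|^{-\gamma} - B\bigr)$ outside a small ball, glued to a smooth piece inside $B_{1/4}$, with $\gamma$ chosen large (depending on $\lambda,\Lambda,d$) so that the Pucci operator is nonpositive away from $B_{1/4}$. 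The explicit second derivatives of a radial function allow one to verify the Pucci inequality directly. This barrier has globally bounded Hessian outside of $B_{1/4}$, say $\|D^2\varphi\|_{L^\infty(\mathbb{R}^d \setminus B_{1/4})} \le M_0$.

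Next I would set $w := u + \varphi$. By the barrier property and the hypothesis $u\in \overline{S}(f)$, the function $w$ lies in an appropriate subsolution class with right-hand side $f - C\chi_{B_{1/4}}$. Because $\|u\|_{L^\infty(\Omega)}\le 1$ and $\varphi \le -2$ on $Q_3 \subset B_{6\sqrt{d}}$, we have $\inf_{Q_3} w \le -1$, while $w \ge -1$ on $\partial B_{6\sqrt{d}}$. Applying the ABP estimate on $B_{6\sqrt{d}}$ to $w$, together with the smallness $\|f\|_{L^p(B_{6\sqrt{d}})} \le \delta_0$ (which controls the $L^d$ contribution from the source, since $p>d$), one obtains
\[
  1 \le C\!\left( \int_{\{w=\Gamma_w\} \cap B_{6\sqrt{d}}} \bigl(|f|^d + \chi_{B_{1/4}}\bigr)\,dx\right)^{\!1/d},
\]
where $\Gamma_w$ is the convex envelope of $-w^-$ extended by $0$. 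Choosing $\delta_0$ universally small, this forces $|\{w=\Gamma_w\}\cap B_{1/4}|$ to exceed a universal constant; a covering/scaling argument involving cubes comparable to $Q_1$ then gives the $Q_1$ version (this is where the specific geometry $B_{6\sqrt{d}}\supset Q_3$ and $Q_1$ comes in).

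At every contact point $x_0$ of the convex envelope of $w$ there is a supporting hyperplane $\ell$ with $\ell \le w$ and $\ell(x_0)=w(x_0)$, so $u(y) \ge \ell(y) - \varphi(y) = \ell(y) - \varphi(x_0) - \bigl(\varphi(y)-\varphi(x_0)\bigr)$ on $B_{6\sqrt{d}}$. Since the contact points lie outside $B_{1/4}$ (after a suitable truncation or because the envelope argument places them in the "big" region), $\varphi$ has Hessian bounded by $M_0$ there, whence
\[
  u(y) \ge \bigl(\text{affine in }y\bigr) - \tfrac{M_0}{2}|y-x_0|^2,
\]
i.e.\ $x_0 \in \underline{G}_{M}(u,\Omega)$ for $M$ universal. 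The complement has measure at most $\sigma < 1$, as desired.

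The main obstacle I anticipate is the usual bookkeeping in the barrier construction: choosing $\gamma$, $A$, $B$ so that simultaneously $\varphi$ is negative enough on $Q_3$, nonnegative outside $B_{2\sqrt{d}}$, and has the correct sign in the Pucci inequality outside $B_{1/4}$. A second delicate point is ensuring that the ABP contact set produced by the argument actually lies in the region where $\varphi$ is $C^{1,1}$, so that the touching statement for $u$ gives a paraboloid of universal opening $M$; this is handled by the specific cut-off location $B_{1/4}$ together with the scale $B_{6\sqrt{d}}$ on which ABP is applied.
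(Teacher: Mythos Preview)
The paper omits the proof and simply refers to Lemma~7.5 in Caffarelli--Cabr\'e, and your outline is precisely that argument: radial barrier plus ABP applied to $w=u+\varphi$.  One step is reversed, though.  The ABP inequality you wrote forces the contact set $\{w=\Gamma_w\}$ to have large measure \emph{inside} the support of the source $\chi_{B_{1/4}}$, not outside it---that is exactly how the $\chi_{B_{1/4}}$ term yields a universal lower bound once the $|f|^d$ contribution is made small.  This does not break the proof: since you glue $\varphi$ smoothly across $B_{1/4}$, the Hessian bound $\|D^2\varphi\|_{L^\infty}\le M_0$ holds on all of $\mathbb{R}^d$, not only on $\mathbb{R}^d\setminus B_{1/4}$, so the concave paraboloid touching $u$ from below is obtained at every contact point, in particular at those inside $B_{1/4}\subset Q_1$.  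No additional covering or scaling is needed to reach $Q_1$.  With this correction, together with the standard remark that the supporting hyperplanes of $\Gamma_w$ have universally bounded slope (so the paraboloid obtained for $u$ on $B_{6\sqrt{d}}$ stays below $u\ge -1$ on all of $\Omega$ after enlarging $M$ slightly), the argument is complete.
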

%\begin{proof}

The proof of Lemma \ref{lemma2} follows closely the one put forward in \cite[Lemma 7.5]{ccbook} and is omitted here.

\begin{Lemma}\label{lemma76}
Let $\Omega$ be a bounded domain such that $B_{6\sqrt{d}}\subset\Omega$. Assume that $u$ is a continuous functions in $\Omega$ such that $u\in\overline{S}(f)$ in $B_{6\sqrt{d}}$. Assume further that $\left\|f\right\|_{B_{6\sqrt{d}}}\leq\delta_0$ and $\underline{G}_1(u,\Omega)\cap Q_3\neq\emptyset$. Then $$
	\left|\underline{G}_M(u,\Omega)\cap Q_1\right|\geq 1-\sigma,
$$
where $0<\sigma<1$, $\delta_0$ and $M>1$ are universal constants.
\end{Lemma}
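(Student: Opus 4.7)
The plan is to reduce Lemma \ref{lemma76} to its normalized counterpart, Lemma \ref{lemma2}, by extracting from the hypothesis $\underline{G}_1(u,\Omega)\cap Q_3\neq\emptyset$ the $L^{\infty}$-type control that Lemma \ref{lemma2} requires. First, pick $x_1\in\underline{G}_1(u,\Omega)\cap Q_3$ and let
\[
P(x)\,=\,u(x_1)\,+\,\ell\cdot(x-x_1)\,-\,\tfrac{1}{2}|x-x_1|^2
\]
be the concave paraboloid of opening $1$ touching $u$ from below at $x_1$, so that $u\ge P$ in $\Omega$ and $u(x_1)=P(x_1)$. Setting $v:=u-P$, one immediately gets $v\ge 0$ in $\Omega$ and $v(x_1)=0$; since $D^2P\equiv -I$, the function $v$ still lies in $\overline{S}(\tilde f)$ for some $\tilde f$ differing from $f$ by a universal additive constant $c_0=c_0(\lambda,\Lambda,d)$. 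In particular $\|\tilde f\|_{L^p(B_{6\sqrt d})}$ remains controlled by a universal constant whenever $\delta_0$ is small.

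The second step, and the main obstacle, is to upgrade the information $v(x_1)=0$, $x_1\in Q_3$, into a universal bound
\[
\|v\|_{L^{\infty}(B_{5\sqrt d})}\,\le\, C_0.
\]
The subtlety is that $v$ lies only in a one-sided Pucci class, so the ABP principle by itself controls the infimum of $v$ from its boundary values but not its supremum. The argument I would run combines the $L^{\varepsilon}$-estimate that underlies Proposition \ref{prop2} --- applied to $v\ge 0$ and giving $\|v\|_{L^{\varepsilon}(B_{5\sqrt d})}\le C(\inf_{B_{6\sqrt d}}v+\|\tilde f\|_{L^{n}})$, which is universally bounded since $\inf v=0$ at $x_1$ --- with a Krylov--Safonov-type local maximum principle adapted to the class $\overline{S}$. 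The pointwise vanishing of $v$ at the interior point $x_1\in Q_3$ is precisely what allows the iteration to close and to produce $C_0$ depending only on $d$, $\lambda$, $\Lambda$, and $\delta_0$.

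Once $C_0$ is secured, define $\bar v:=v/C_0$. Then $\|\bar v\|_{L^{\infty}(B_{6\sqrt d})}\le 1$ and $\bar v\in\overline{S}(\tilde f/C_0)$; choosing the original $\delta_0$ suitably small forces $\|\tilde f/C_0\|_{L^p(B_{6\sqrt d})}$ below the threshold supplied by Lemma \ref{lemma2}. Applying that lemma to $\bar v$ yields
\[
\bigl|\underline{G}_{M_{\ast}}(\bar v,\Omega)\cap Q_1\bigr|\,\ge\,1-\sigma
\]
for universal constants $M_{\ast}$ and $\sigma$. Undoing the normalizations is then immediate: if a concave paraboloid of opening $M_{\ast}$ touches $\bar v$ from below at $y\in\Omega$, multiplying by $C_0$ produces one of opening $C_0M_{\ast}$ touching $v$ from below at $y$, and adding back $P$ (opening $1$) produces one of opening $C_0M_{\ast}+1$ touching $u$ from below at $y$ in $\Omega$. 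This gives the inclusion $\underline{G}_{M_{\ast}}(\bar v,\Omega)\subset\underline{G}_{M}(u,\Omega)$ with the universal choice $M:=C_0M_{\ast}+1$, and the measure estimate of Lemma \ref{lemma76} follows.
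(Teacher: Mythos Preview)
Your reduction has a genuine gap at the step where you claim a universal $L^\infty$ bound $\|v\|_{L^\infty(B_{5\sqrt d})}\le C_0$. The tool you invoke --- a ``Krylov--Safonov local maximum principle adapted to $\overline{S}$'' --- does not exist in the direction you need. The local maximum principle (Theorem 4.8 in \cite{ccbook}) controls $\sup u^+$ by $\|u^+\|_{L^{\varepsilon}}$ for functions in the \emph{sub}solution class $\underline{S}$; for functions merely in $\overline{S}$ there is no such bound. A one-dimensional example makes this transparent: for any $M>0$ the function $v(x)=M\max(x,0)$ lies in $\overline{S}(0)$, is nonnegative, vanishes at the interior point $0$, and yet $\sup_{B_1} v = M$. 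So the combination ``$v\ge 0$, $v(x_1)=0$, $v\in\overline{S}(\tilde f)$ with small $\|\tilde f\|$'' cannot force a sup bound, and your normalization $\bar v = v/C_0$ is not available.

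The argument the paper defers to (\cite[Lemma 7.6]{ccbook}) sidesteps this entirely: it never seeks an upper bound on the auxiliary function. One subtracts only the \emph{affine} part $L$ of the touching paraboloid and sets $v=(u-L)/C$ with $C$ a dimensional constant, obtaining $v\ge -1$ in $B_{6\sqrt d}$ together with $v(x_1)=0$ for some $x_1\in Q_3$. The point is that the \emph{proof} of Lemma \ref{lemma2} (as opposed to its statement) does not use the full hypothesis $|u|\le 1$: the barrier/ABP step there only needs a uniform lower bound on the domain and the existence of a single point in $Q_3$ where the function is $\le 1$. Those are exactly the two pieces of information the touching paraboloid provides, and the conclusion $|\underline{G}_M(v,\Omega)\cap Q_1|\ge 1-\sigma$ follows directly; undoing the affine subtraction then gives the lemma with $M$ replaced by $CM$. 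Your decomposition $v=u-P$ would also feed into this argument (it gives $v\ge 0$ and $v(x_1)=0$), so the fix is not to change the subtraction but to apply the proof of Lemma \ref{lemma2} rather than its statement.
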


As before, we omit the proof of Lemma \ref{lemma76} and refer the reader to \cite[Lemma 7.6]{ccbook}. Next, we recall an elementary result derived from the Calder\'on-Zygmund cube decomposition:

\begin{Lemma}\label{lemma42}
Let $A\subset B\subset Q_1$ be measurable sets and assume that $0<\delta<1$ is such that
\begin{enumerate}
\item $\left|A\right|\,\leq\,\delta;$

\item if $Q$ is a dyadic cube such that $\left|A\cap Q\right|>\delta\left|Q\right|$, then $\tilde{Q}\subset B$, where $\tilde{Q}$ is the predecessor of $Q$.
\end{enumerate}
Then, we have 
$$
	\left|A\right|\,\leq\,\delta\left|B\right|.
$$
\end{Lemma}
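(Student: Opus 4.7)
The plan is to apply a Calderón--Zygmund stopping-time decomposition of $Q_1$ relative to the set $A$ at threshold $\delta$. Hypothesis (1) says $|A\cap Q_1|\leq \delta = \delta|Q_1|$, so $Q_1$ itself is not ``heavy''; this is exactly what allows the procedure to begin by subdividing $Q_1$ into its $2^d$ dyadic children. Iteratively, any dyadic cube $Q$ satisfying $|A\cap Q|\leq \delta|Q|$ is subdivided further, while any $Q$ with $|A\cap Q|>\delta|Q|$ is selected and left untouched. This yields a countable collection $\{Q_j\}$ of pairwise disjoint dyadic subcubes of $Q_1$ with
\[
|A\cap Q_j|>\delta|Q_j|\qquad\text{and}\qquad |A\cap \tilde Q_j|\leq \delta|\tilde Q_j|,
\]
where $\tilde Q_j$ denotes the dyadic predecessor of $Q_j$ (which exists precisely because $Q_1$ was never selected).

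Next, the Lebesgue differentiation theorem implies that almost every $x\in A$ is a point of density one for $A$. Hence, for sufficiently small dyadic cubes $Q\ni x$ we have $|A\cap Q|/|Q|>\delta$, so the stopping rule must eventually select a dyadic cube containing $x$. Therefore $A\subset \bigcup_j Q_j$ up to a set of measure zero. Combining the predecessor bound above with hypothesis (2) gives $\tilde Q_j\subset B$ for every $j$. Since any two dyadic cubes are either disjoint or nested, from $\{\tilde Q_j\}$ we may extract the subfamily $\{\tilde Q_{j_\ell}\}$ consisting of its maximal elements; these cubes are pairwise disjoint, each contained in $B$, and every $Q_j$ is contained in some $\tilde Q_{j_\ell}$. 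In particular $A\subset \bigcup_\ell \tilde Q_{j_\ell}$ modulo a null set.

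Putting the pieces together and using disjointness of the maximal predecessors,
\[
|A|=\sum_\ell |A\cap \tilde Q_{j_\ell}|\leq \delta\sum_\ell|\tilde Q_{j_\ell}|=\delta\Bigl|\bigcup_\ell \tilde Q_{j_\ell}\Bigr|\leq \delta|B|,
\]
which is the desired bound. The only delicate step is the extraction of a pairwise disjoint subfamily of predecessors, which is a standard feature of the dyadic structure; the role of hypothesis~(1) is solely to prevent the stopping procedure from immediately selecting the ambient cube $Q_1$, which has no predecessor available to be placed inside $B$.
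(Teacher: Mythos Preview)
Your proof is correct and follows the standard Calder\'on--Zygmund stopping-time argument. The paper does not actually supply a proof of this lemma; it merely states the result as ``an elementary result derived from the Calder\'on--Zygmund cube decomposition'' and moves on, so your write-up is consistent with what the authors had in mind.
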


\begin{Lemma}\label{lemma77}
Under the hypotheses of Lemma \ref{lemma2}, extend $f$ by zero outside $B_{6\sqrt{d}}$ and let
\[
	A\,\doteq\,\underline{A}_{M^{k+1}}(u,\Omega)\cap Q_1,
\]
\[
	B\,\doteq\, \left(\underline{A}_{M^{k}}(u,\Omega)\cap Q_1\right)\cap\left\lbrace x\in Q_1\,|\,m(f^n)(x)\geq(c_1M^k)^n\right\rbrace.
\]
Then, 
\[
	\left|A\right|\,\leq\,\sigma\left|B\right|,
\]where $0<\sigma<1$, $\delta_0$, $M>1$ and $c_1>0$ are universal constants.
\end{Lemma}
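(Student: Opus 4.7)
My plan is to apply the Calder\'on--Zygmund-type decomposition of Lemma \ref{lemma42} with threshold $\delta = \sigma$ to the pair $(A,B)$; this reduces the claim to verifying two conditions: $(i)$ $|A| \le \sigma$, and $(ii)$ for every dyadic cube $Q \subset Q_1$ with $|A \cap Q| > \sigma |Q|$, the predecessor $\tilde Q$ is contained in $B$.

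Condition $(i)$ is immediate. Since $M^{k+1} \ge M$, any concave paraboloid of opening $M$ also qualifies as a paraboloid of opening $M^{k+1}$; hence $\underline G_M(u,\Omega) \subset \underline G_{M^{k+1}}(u,\Omega)$ and therefore $A \subset \underline A_M(u,\Omega) \cap Q_1$. Lemma \ref{lemma2}, applied to $u$, then yields $|A| \le \sigma$.

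For $(ii)$ I proceed by contrapositive. Assume $\tilde Q \not\subset B$ and select $x_0 \in \tilde Q \setminus B$; then $x_0 \in \underline G_{M^k}(u,\Omega) \cap Q_1$ and $m(f^n)(x_0) < (c_1 M^k)^n$. Let $x_Q$ and $r$ denote the center and side length of $Q$, and introduce the rescaled functions
$$
\tilde u(y) := \frac{u(x_Q + ry)}{M^k r^2}, \qquad \tilde f(y) := \frac{f(x_Q + ry)}{M^k}.
$$
A direct computation shows $\tilde u \in \overline S(\tilde f)$ on the rescaled domain, and the paraboloid of opening $M^k$ touching $u$ from below at $x_0$ rescales to a paraboloid of opening $1$ touching $\tilde u$ from below at $y_0 := (x_0 - x_Q)/r$. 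Since $\tilde Q$ has side length $2r$ and its center lies within $\ell^\infty$-distance $r/2$ of $x_Q$, the point $y_0$ belongs to $Q_3$; in particular $\underline G_1(\tilde u, \tilde \Omega) \cap Q_3 \ne \emptyset$. Moreover, a change of variables combined with the ball inclusion $B_{6r\sqrt d}(x_Q) \subset B_{8r\sqrt d}(x_0)$ gives
$$
\|\tilde f\|_{L^p(B_{6\sqrt d})}^p = M^{-kp}\, r^{-d} \int_{B_{6r\sqrt d}(x_Q)} |f|^p \, dy \;\le\; C\, m(|f|^p)(x_0)\, M^{-kp} \;\le\; C c_1^p,
$$
where $n = p$ is chosen for concreteness. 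Picking $c_1 > 0$ universally small forces $\|\tilde f\|_{L^p(B_{6\sqrt d})} \le \delta_0$, whence all hypotheses of Lemma \ref{lemma76} are in force for $\tilde u$. That lemma delivers $|\underline A_M(\tilde u, \tilde \Omega) \cap Q_1| \le \sigma$, and undoing the scaling transports this inequality into $|A \cap Q| \le \sigma |Q|$. Lemma \ref{lemma42} then yields $|A| \le \sigma |B|$.

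The main technical hurdle is the precise normalization $M^k r^2$ in $\tilde u$: this single factor must simultaneously transform paraboloids of opening $M^{k+1}$ at $u$ into paraboloids of opening $M$ at $\tilde u$, preserve membership in the Pucci class $\overline S(\tilde f)$, and allow the pointwise maximal function bound at $x_0$ to be absorbed into an $L^p$-smallness condition on $\tilde f$ at the universal threshold $\delta_0$. Once this scaling is pinned down, the proof reduces to a bookkeeping application of Lemmas \ref{lemma2}, \ref{lemma76}, and \ref{lemma42}.
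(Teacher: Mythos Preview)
Your argument is correct and matches the paper's intended approach: the paper does not write out the proof but only records the rescaled function $\tilde{u}(y)=\frac{2^{2i}}{M^k}u(x_0+2^{-i}y)$ (equation \eqref{utilde}) and refers to standard arguments, and your scaling $\tilde u(y)=u(x_Q+ry)/(M^kr^2)$ with $r=2^{-i}$ is precisely this. Your verification of the two hypotheses of Lemma \ref{lemma42}---the global bound via Lemma \ref{lemma2} and the contrapositive dyadic step via Lemma \ref{lemma76}---is the standard Caffarelli--Cabr\'e route; note only that the $\cap$ in the definition of $B$ in the statement is a typographical slip for $\cup$ (compare Lemma \ref{lemma712}), which you have implicitly corrected when reading off the two conditions on $x_0$.
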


The proof of Lemma \ref{lemma77} relies on standard arguments, together with an auxiliary function $\tilde{u}$ determined by
\begin{equation}\label{utilde}
\tilde{u}(y)\,\doteq\,\frac{2^{2i}}{M^k}u\left(x_0\,+\,\frac{1}{2^i}y\right).
\end{equation} Later in the present paper, we refer to \eqref{utilde}.

In the next lemma, we recall a result on the decay rate of the measure of certain sets.

\begin{Lemma}\label{lemma78}
Under the hypotheses of Lemma \ref{lemma2}, 
\begin{equation}\label{lem78}
	\left|\underline{A}_t(u,\Omega)\cap Q_1\right|\leq c_2t^{-\mu},\,\,\,\,\,\forall\;t\,>\,0,
\end{equation}
where $c_2\,\mu>0$ are universal constants. If, in addition, $u\in S(f)$ in $B_{6\sqrt{d}}$, then
\begin{equation}\label{eq782}
	\left|A_t(u,\Omega)\cap Q_1\right|\leq c_2t^{-\mu},\,\,\,\,\,\forall\;t\,>\,0.
\end{equation}
\end{Lemma}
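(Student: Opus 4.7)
\medskip

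\noindent\textbf{Proof plan for Lemma \ref{lemma78}.} The plan is to obtain \eqref{lem78} by iterating Lemma \ref{lemma77} starting from the base decay provided by Lemma \ref{lemma2}, and then passing from the discrete sequence $t = M^k$ to arbitrary $t > 0$ by a standard monotonicity argument. The two-sided bound \eqref{eq782} will follow by applying the one-sided estimate to both $u$ and $-u$.

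\medskip

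\noindent\textbf{Step 1: discrete decay.} Write
$$
\alpha_k := \left|\underline{A}_{M^k}(u,\Omega)\cap Q_1\right|, \qquad k\ge 0.
$$
By Lemma \ref{lemma2} (applied with $t=M$ after an initial scaling if needed) we have $\alpha_0 \le \sigma$. For the inductive step, apply Lemma \ref{lemma77} with the sets
$$
A = \underline{A}_{M^{k+1}}(u,\Omega)\cap Q_1, \qquad B = \left(\underline{A}_{M^k}(u,\Omega)\cap Q_1\right)\cup\left\{x\in Q_1 : m(f^n)(x)\ge (c_1 M^k)^n\right\},
$$
to obtain
$$
\alpha_{k+1} \le \sigma\, \alpha_k + \sigma\,\Big|\{x\in Q_1 : m(f^n)(x)\ge (c_1 M^k)^n\}\Big|.
$$
Here I use $p>d\ge n$ in the hypothesis of Theorem \ref{w2ptheorem}, which permits us to apply the strong $(p/n,p/n)$ bound for the Hardy--Littlewood maximal function to $f^n\in L^{p/n}$. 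Chebyshev's inequality then yields
$$
\Big|\{m(f^n)\ge (c_1 M^k)^n\}\Big| \le \frac{C\|f\|_{L^p}^{\,p}}{(c_1 M^k)^p} \le C_0\, M^{-pk}.
$$

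\medskip

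\noindent\textbf{Step 2: iteration.} Substituting gives the linear recursion $\alpha_{k+1}\le \sigma \alpha_k + C_1 M^{-pk}$ with $\sigma\in(0,1)$, $M>1$ and $p>0$. Fixing the universal exponent
$$
\mu := \frac{\log(1/\sigma)}{\log M}\in(0,\infty),
$$
one sees that $\sigma = M^{-\mu}$. Provided $M$ is chosen (universally) so that $M^{-p}\le \sigma$---which is harmless, since enlarging $M$ only enlarges $\sigma$ within $(0,1)$---an elementary telescoping of the recursion gives
$$
\alpha_k \le \sigma^{k} + C_1\sum_{j=0}^{k-1}\sigma^{k-1-j} M^{-pj} \le C_2\, M^{-\mu k}
$$
for every $k\ge 0$, with $C_2>0$ universal. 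If instead $M^{-p}>\sigma$, the same telescoping yields $\alpha_k\le C_2 M^{-pk}$, and one simply replaces $\mu$ by $\min(\mu,p)$; either way we obtain a universal exponent $\mu>0$ for which $\alpha_k\le C_2 M^{-\mu k}$.

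\medskip

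\noindent\textbf{Step 3: passage to continuous $t$ and the two-sided bound.} Given $t>0$, either $t<1$, in which case \eqref{lem78} is trivial by taking $c_2$ large enough, or there is $k\ge 0$ with $M^k\le t <M^{k+1}$. Since $\underline{A}_t\subset \underline{A}_{M^k}$, the discrete estimate gives
$$
\left|\underline{A}_t(u,\Omega)\cap Q_1\right| \le \alpha_k \le C_2 M^{-\mu k} \le C_2 M^\mu\, t^{-\mu},
$$
proving \eqref{lem78}. For \eqref{eq782}, note that $u\in S(f)$ means that both $u$ and $-u$ belong to $\overline{S}(|f|)$ (up to the usual sign conventions for Pucci operators); applying \eqref{lem78} to each and using $A_t = \underline{A}_t(u)\cup \underline{A}_t(-u)$ in $Q_1$ gives the two-sided bound with the same exponent $\mu$ and a universal constant.

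\medskip

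\noindent\textbf{Main obstacle.} The delicate point is Step 1: identifying $B$ correctly so that Lemma \ref{lemma77} furnishes the recursion, and ensuring that the maximal-function term is summable. The hypothesis $p>d$ is used precisely here, through the strong-type bound for $m(f^n)$ in $L^{p/n}$; without $p>n$ one only obtains a weak-type bound which would not close the iteration in the form stated.
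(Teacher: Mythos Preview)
The paper does not supply its own proof of this lemma; it is quoted as a known building block from \cite[Lemma~7.8]{ccbook}. Your argument is correct and is precisely the standard Caffarelli--Cabr\'e proof: iterate Lemma~\ref{lemma77} (the paper's $\cap$ in the definition of $B$ there is a typo for $\cup$, which you silently fixed), bound the maximal-function level sets, telescope the resulting recursion, and then pass from $t=M^k$ to arbitrary $t>0$ and to the two-sided case by the symmetry $\overline{A}_t(u)=\underline{A}_t(-u)$.

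One small correction to your closing commentary: the restriction $p>d$ is \emph{not} needed for Lemma~\ref{lemma78}. The weak-$(1,1)$ bound for the maximal function already gives
\[
\bigl|\{m(f^n)\ge (c_1M^k)^n\}\bigr|\;\le\;C\,\|f\|_{L^n}^{\,n}\,(c_1M^k)^{-n},
\]
which closes the iteration with exponent $\min(\mu,n)$; this is why the lemma holds under the mere $L^d$ hypothesis of Lemma~\ref{lemma2} and yields the universal $W^{2,\delta}$ estimate of Proposition~\ref{prop2}. The condition $p>d$ enters only later, in the proof of Proposition~\ref{prop1}, where one needs the strong-type bound to make $\sum_k M^{pk}\beta_k<\infty$.
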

%\begin{proof}
%Set 
%$$
%	\alpha_k\doteq \left|\underline{A}_{M^k}(u,\Omega)\cap Q_1\right|\;\;\;\;\mbox{and}\;\;\;\;\beta_k\doteq \left|\left\lbrace x\in Q_1\,|\,m(f^d)(x)\geq \left(c_1M^k\right)^d\right\rbrace\right|.
%$$
%Lemma \ref{lemma77} yields $\alpha_{k+1}\leq \sigma(\alpha_k+\beta_k)$, and, therefore,
%$$
%	\alpha_k\leq \sigma^k+\sum_{i=0}^{k-1}\sigma^{k-i}\beta_i.
%$$
%Since 
%$$
%	\left\|f\right\|_{L^1}\leq\delta_0
%$$ 
%and the maximal operator is of weak type $(1,1)$, it follows that 
%$$
%	\beta_k\leq CM^{kd},
%$$ 
%for some constant $C>0$, depending only on $d$ and $\delta_0$. Hence,
%$$
%	\sum_{i=0}^{k-1}\sigma^{k-i}\beta_i\leq C\sum_{i=0}^{k-1}\sigma^{k-i}M^{-dk}\leq Ckm_o^k,
%$$
%where $m_0\doteq\max\left\lbrace\sigma,\,M^{-d}\right\rbrace<1.$By combining these, one obtains 
%$$
%	\alpha_k\leq \sigma^k+Ckm_0^k\leq (1+Ck)m_0^k,
%$$
%which establishes \eqref{lem78}.

%To verify \eqref{eq782} we apply \eqref{lem78} to $-u$; hence, $$\left|\overline{A}_t(u,\Omega)\cap Q_1\right|\leq c_2t^{-\mu},$$which, combined with \eqref{lem78} concludes the proof.
%\end{proof}

We conclude this section by combining the above building-block lemmas, yielding a proof of Proposition \ref{prop2}.

\begin{proof}[Proof of Proposition \ref{prop2}.]
We recur to a standard covering argument and assume that $u\in S(f)$ in $B_{6\sqrt{d}}$, with $\left\|u\right\|_{L^\infty(B_{6\sqrt{d}})}\leq 1$ and $\left\|f\right\|_{L^d(B_{6\sqrt{d}})}\leq \delta_0$. Set $\delta\doteq\mu/2$.
Because of \eqref{eq782}, we have 
$$
	\sum_{k\geq 1}M^{\frac{\mu}{2}k}\left|A_{M^k}(B_{6\sqrt{d}})\cap Q_1\right|\leq C,
$$
for a universal constant $C>0$, where we have used $\Omega = B_{6\sqrt{d}}$. Furthermore, 
$$
	A_{M^k}(u,B_{1/2})\subset A_{M^k}(u,B_{6\sqrt{d}})\cap Q_1;
$$
therefore, 
$$
	\sum_{k\geq 1}M^{\frac{\mu}{2}k}\left|A_{M^k}(B_{1/2})\right|\leq C.
$$

Define $\Theta$ by
\[
\Theta(x):=\inf\left\{M\,|\,x\in G_M(B_{1/2})\right\}.
\] 
Hence, 
\begin{equation}\label{eq79}
	\mu_\Theta(t)\,\leq\,\left|A_t(B_{1/2})\right|.
\end{equation}
The inequality in \eqref{eq79} and the Lemma \ref{lemma1} then lead to 
$$
	\left\|D^2u\right\|_{L^{\frac{\mu}{2}(B_{1/2})}}\leq C\left\|\Theta\right\|_{L^{\frac{\mu}{2}(B_{1/2})}}\leq C.
$$

\end{proof}

\section{A new geometric tangential path}\label{gtm}

Here we find a linking path from the regularity theory of the original operator $F$ to the one of the recession operator $F^{*}$. From the heuristics of the geometric tangential methods, this fact allows us to import the estimates available for $F^{*}$ back to the original operator through this chosen path. As expected, this pass-through mechanism is not without a cost; if the auxiliary problem has a given property, the original one inherits a slightly weaker variation of it. However, as we will see in section \ref{sct proof main thm}, this is just enough to obtain sharp $W^{2,p}$ estimates for \eqref{eq1}.

\begin{Lemma}[Local uniform convergence]\label{luc}
Let $F$ be a uniformly elliptic operator and assume $F^{*}$ exists. Then, for every $\epsilon>0$ there is $\mu_0>0$ such that, for every $\mu<\mu_0$ we have
$$
	\left | \mu F(\mu^{-1}M)\,-\,F^*(M)\right | \,\leq\,\epsilon\left(1+\left\|M\right\|\right),
$$
for every $M\in\mathcal{S}(d)$.  
\end{Lemma}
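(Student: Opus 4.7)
The plan rests on two ingredients: an equi-Lipschitz property of the family $\{F_\mu\}_{\mu>0}$ that upgrades the pointwise definition of $F^*$ into \emph{locally} uniform convergence, together with the $1$-homogeneity of $F^*$, which then globalizes this into the claimed affine estimate on all of $\mathcal{S}(d)$.

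First, I would observe that Assumption A\ref{a1}, expressed through the Pucci sandwich $\mathscr{P}^-(M-N)\le F(M)-F(N)\le \mathscr{P}^+(M-N)$, immediately produces a Lipschitz bound $|F(M)-F(N)|\le L\|M-N\|$ with $L=L(d,\lambda,\Lambda)$. Since $F_\mu(M)=\mu F(\mu^{-1}M)$, the Lipschitz constant is preserved: $|F_\mu(M)-F_\mu(N)|\le L\|M-N\|$, uniformly in $\mu>0$, and, taking limits, the same bound holds for $F^*$. Together with $F_\mu(0)=F^*(0)=0$ this yields the a priori growth $|F_\mu(M)|,|F^*(M)|\le L\|M\|$, and identifies $\{F_\mu\}_{\mu>0}\cup\{F^*\}$ as an equi-Lipschitz, equibounded family on each closed ball of $\mathcal{S}(d)$.

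Next, I would combine the pointwise convergence $F_\mu\to F^*$ guaranteed by Definition \ref{recession1} with the equicontinuity above: by Arzel\`a--Ascoli, the convergence upgrades to uniform convergence on compact subsets. Applied on the closed unit ball $\overline{B_1}\subset\mathcal{S}(d)$, this provides, for each $\epsilon>0$, some $\mu_0=\mu_0(\epsilon)>0$ such that
\[
    |F_\mu(\hat M)-F^*(\hat M)|\le\epsilon,\qquad \forall\,\mu\le\mu_0,\;\forall\,\hat M\in\overline{B_1}.
\]
To pass from $\overline{B_1}$ to arbitrary $M$, I would invoke the $1$-homogeneity of $F^*$, which follows from the defining limit via the change of variables $\mu'=\mu/t$: $F^*(tM)=\lim_{\mu\to 0}\mu F(\mu^{-1}tM)=t\lim_{\mu'\to 0}\mu' F({\mu'}^{-1}M)=tF^*(M)$. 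For $\|M\|\ge 1$, write $R=\|M\|$, $\hat M=R^{-1}M$, $\nu=\mu/R$, and compute
\[
    F_\mu(M)=\mu F(\mu^{-1}R\hat M)=R\bigl(\nu F(\nu^{-1}\hat M)\bigr)=R\,F_\nu(\hat M),\qquad F^*(M)=R\,F^*(\hat M).
\]
Because $\nu\le\mu\le\mu_0$, the uniform bound on $\overline{B_1}$ applies to $F_\nu(\hat M)-F^*(\hat M)$ and gives $|F_\mu(M)-F^*(M)|=R\,|F_\nu(\hat M)-F^*(\hat M)|\le\epsilon R=\epsilon\|M\|$. Merging this with the direct bound on $\overline{B_1}$ produces $|F_\mu(M)-F^*(M)|\le\epsilon(1+\|M\|)$ for every $M\in\mathcal{S}(d)$ and every $\mu\le\mu_0$, as claimed.

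I do not anticipate a genuine obstacle: the only moving part is making sure that the equi-Lipschitz bound survives both the rescaling $M\mapsto\mu^{-1}M$ and the passage to the limit, and this is the first thing one verifies. The real content of the statement is that, thanks to $1$-homogeneity, the affine factor $(1+\|M\|)$ is the natural and optimal substitute for a purely uniform estimate, since $F^*$ itself grows linearly at infinity.
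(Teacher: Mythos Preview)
Your proposal is correct and follows essentially the same route as the paper: uniform ellipticity gives equi-Lipschitz bounds, hence locally uniform convergence $F_\mu\to F^*$ (via Arzel\`a--Ascoli) on the unit ball, and then the rescaling $\nu=\mu/\|M\|$ together with the $1$-homogeneity of $F^*$ pushes the estimate to all of $\mathcal{S}(d)$. If anything, your write-up is slightly more careful in justifying why the \emph{full} family (not just a subsequence) converges uniformly, since you start from the assumed pointwise limit and upgrade via equicontinuity.
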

\begin{proof} The proof of Lemma \ref{luc} is elementary, but we carry it out as a courtesy to the readers. Since all $F_\mu$ are uniformly elliptic operators, by the Arzel\`a-Ascoli Theorem, up to a subsequence, $F_\mu$ converges uniformly in every compact sets of $\mathcal{S}(d)$ to $F^{*}$.  Hence, given $\epsilon>0$ there exists a $\delta>0$ so that 
\[ 
	\left | \mu F(\mu^{-1} M) - F^*(M) \right | \leq \epsilon, 
\]
for all matrices $M$ such that $\|M\|\leq 1$ and all $\mu < \delta$. Now let $M$ be a matrix with $\|M\| > 1$. For any $\mu <  \delta$, we look at 
$$
	\mu_1 = ||M||^{-1} \mu < \mu < \delta
$$ 
and from above,
\[ 
	\left | {\mu_1 F\left( \mu_1^{-1} \frac M {||M||} \right) - F^*\left( \frac M {||M||} \right)} \right | \leq \epsilon. 
\]
Since $\mu_1^{-1} \frac M {||M||} = \mu^{-1} M$, and using the fact that $F^*$ is homogeneous of degree one, we conclude the proof of the lemma.
\end{proof}

Next Lemma is instrumental in ensuring that solutions of \eqref{eq1} can be approximated by $F^*$-harmonic functions, in certain compact functional spaces.

%\begin{Definition}[$\mathcal{C}^{1,1}$-interior estimates]
%We say that $F_\mu(D^2w)=0$ has $\mathcal{C}^{1,1}$-interior estimates in $\Omega$, with constant $c$, if for any $w_0\in\mathcal{C}(\partial %\Omega)$ there exists $w\in\mathcal{C}^2(\Omega)\cap\mathcal{C}(\overline{\Omega})$, a solutioin of
%\begin{equation*}
%\begin{cases}
%F(D^2w)=0\;\;\;& x\in \Omega\\
%w(x)=w_0(x)\;\;\;& x\in\partial \Omega,
%\end{cases}
%\end{equation*}such that $w\in\mathcal{C}^{1,1}\left(\overline{\Omega_0}\right)$ with %$$\left\|w\right\|_{\mathcal{C}^{1,1}(\overline{\Omega_0})}\,\leq\,c\left\|w_0\right\|_{L^\infty(\partial \Omega)},$$for every %$\Omega_0\subset\subset\Omega$.
%\end{Definition}We observe that $F^*(D^2u)=f(x)$ has $C^{1,1}$ estimates, for $F^*$ is concave. In the sequel, we put forward a $W^{2,p}$ theory for solutions of \eqref{eq1} that explores precisely this fact. That is, 

\begin{Lemma}[Approximation Lemma]\label{approx}
Let $u\in\mathcal{C}(B_1)$ be a viscosity solution of 
\begin{equation}\label{eqap}
	F_\mu(D^2u)\,=\,f(x)\;\;\;\;\mbox{in}\;\;B_1
\end{equation}
with $\left\|u\right\|_{L^\infty(B_1)}\leq 1$, and assume that A\ref{a1}-\ref{a3} are satisfied. Given $\delta>0$, there exists $\epsilon=\epsilon(\delta,\,d, F)$ such that, if 
$$
	\left\|f\right\|_{L^p(B_1)}\,\leq\, \epsilon\;\;\;\;\;\;\mbox{and}\;\;\;\;\;\;\mu\,<\,\epsilon,
$$
there exists $h\in \mathcal{C}^{1,1}(B_{3/4})$, solution to
\begin{equation}\label{eqaph}
	F^*(D^2h)\,=\,0\;\;\;\;\mbox{in}\;\;B_{3/4},
\end{equation}
and 
$$
	\left\|u-h\right\|_{L^\infty(B_{1/2})}\leq \delta.
$$
\end{Lemma}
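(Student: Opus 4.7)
\medskip

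\noindent\textbf{Proof strategy.} The plan is to argue by a standard compactness-contradiction scheme, of the sort pioneered by Caffarelli \cite{MR1005611}, exploiting Lemma \ref{luc} to pass the operators $F_\mu$ to the recession profile $F^{*}$ in the limit.

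Suppose, for the sake of contradiction, that the conclusion fails. Then there exist $\delta_0 > 0$ and sequences $\mu_k \to 0$, $f_k \in C^0(B_1)$ with $\|f_k\|_{L^p(B_1)} \to 0$, and viscosity solutions $u_k \in C(B_1)$ of
\[
    F_{\mu_k}(D^2 u_k) \,=\, f_k(x) \quad \text{in } B_1, \qquad \|u_k\|_{L^\infty(B_1)} \le 1,
\]
such that
\[
    \|u_k - h\|_{L^\infty(B_{1/2})} \,>\, \delta_0 \qquad \text{for every } h \text{ with } F^{*}(D^2 h) = 0 \text{ in } B_{3/4}.
\]
Since each $F_{\mu_k}$ is $(\lambda,\Lambda)$-elliptic with $F_{\mu_k}(0) = 0$, the Krylov--Safonov interior H\"older estimates yield equicontinuity of $\{u_k\}$ on $\overline{B_{3/4}}$. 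Combined with the uniform bound $\|u_k\|_{L^\infty(B_1)} \le 1$, the Arzel\`a--Ascoli theorem supplies a subsequence (still denoted $u_k$) converging uniformly on $\overline{B_{3/4}}$ to some $u_\infty \in C(\overline{B_{3/4}})$.

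The crux of the argument is to verify that $u_\infty$ is a viscosity solution of $F^{*}(D^2 u_\infty) = 0$ in $B_{3/4}$. For this, one combines two ingredients: first, Lemma \ref{luc} gives $F_{\mu_k} \to F^{*}$ locally uniformly on $\mathcal S(d)$; second, the assumption $p > d$ together with $\|f_k\|_{L^p} \to 0$ lets us absorb the right-hand side in the stability argument (this is the classical stability statement for viscosity solutions under uniform convergence of the operators, once one also controls the source term; invoking the $L^p$-viscosity framework of \cite{CCKS} handles the $L^p$ source cleanly). Concretely, if $\varphi$ is a smooth test function touching $u_\infty$ strictly from above at an interior point $x_0$, then by uniform convergence there are points $x_k \to x_0$ where $\varphi$ touches $u_k$ from above, so $F_{\mu_k}(D^2\varphi(x_k)) \ge f_k(x_k)$ at (approximate) contact points; passing to the limit via Lemma \ref{luc} and the smallness of $f_k$ yields $F^{*}(D^2\varphi(x_0)) \ge 0$. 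The supersolution direction is symmetric. Once $F^{*}(D^2 u_\infty) = 0$ in $B_{3/4}$ is established, Assumption A\ref{a2} delivers $u_\infty \in C^{1,1}_{\text{loc}}(B_{3/4})$; in particular $h := u_\infty$ is an admissible candidate in the competition. Taking $k$ large enough that $\|u_k - u_\infty\|_{L^\infty(B_{1/2})} < \delta_0$ yields the desired contradiction.

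\medskip

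\noindent\textbf{Expected main obstacle.} The delicate step is the stability of viscosity solutions when the source converges only in $L^p$ rather than uniformly. Pointwise information on $f_k$ is unavailable, so one must either (i) invoke the $L^p$-viscosity solution machinery \cite{CCKS}, which guarantees that uniform limits of $L^p$-viscosity solutions with sources converging to zero in $L^p$ (with $p>d$) are viscosity solutions of the limiting homogeneous equation, or (ii) perform the test-function argument with a correction term built from the Alexandrov--Bakelman--Pucci estimate applied to the difference $u_k - v_k$, where $v_k$ solves an auxiliary equation with the same operator but zero source, exploiting that $\|u_k - v_k\|_{L^\infty} \lesssim \|f_k\|_{L^p} \to 0$. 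Both routes are now standard but represent the technical heart of the proof; everything else is compactness and ellipticity bookkeeping.
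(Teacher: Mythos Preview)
Your proof is correct and follows essentially the same compactness--contradiction approach as the paper: assume failure, extract a $C^{0,\alpha}$-convergent subsequence via Krylov--Safonov, pass to the limit using $F_{\mu_k}\to F^{*}$ and $f_k\to 0$ to obtain an $F^{*}$-harmonic limit $u_\infty$, and set $h=u_\infty$ for the contradiction. The paper invokes ``classical stability results'' without elaboration at the point you flag as the main obstacle, so your discussion of the $L^p$-source stability issue (via \cite{CCKS} or an ABP correction) is in fact more careful than the paper's own treatment.
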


\begin{proof}
We prove the lemma by the way of contradiction. Suppose its statement is false. Then, there exists $\delta_0>0$ and a sequence of functions $(u_j)_{j\in\mathbb{N}}$ and $(f_j)_{j\in\mathbb{N}}$ satisfying 
\[
	F_{\mu_j}(D^2u_j)\,=\,f_j(x)\;\;\;\;\mbox{in}\;\;B_1,
\]
where $\mu_j,\,\left\|f_j\right\|_{L^p(B_1)}=\mbox{o}(1)$ and such that 
\begin{equation}\label{enemy}
	\left\|u_j\,-\,h\right\|_{L^\infty(B_{1/2})}\,>\,\delta_0,
\end{equation}
for every $h$ solution of \eqref{eqaph}. 

On the other hand, because of standard results in elliptic regularity theory, we have that $(u_j)_{j\in\mathbb{N}}\in\mathcal{C}^{0,\alpha}(B_1)$. It means that, through a subsequence if necessary,
$$
	u_j\to u_\infty \quad \text{in the } C_\text{loc} ^{0,\alpha}(B_1) \text{ topology}.
$$
 Furthermore, $F_{\mu_j}$ converges uniformly in compact sets of $\mathcal{S}(d)$ to $F^*$ and $f_j\to 0$. By classical  stability results in the theory of viscosity solutions,
$$
	F^*(D^2u_\infty)\,=\,0\;\;\;\mbox{in}\;B_{3/4}.
$$
Setting $h\equiv u_\infty$ in \eqref{enemy} one is led to a contradiction for $j$ sufficiently large, and the proof is concluded.
\end{proof}

\begin{Remark}
In the proof of Lemma \ref{approx} we have used a compactness argument based on the fact that $(u_j)_{j\in\mathbb{N}}\subset\mathcal{C}^{0,\alpha}(B_1)$ for some $\alpha\in(0,1)$. Though this inclusion suffices for the purposes of the lemma, one can, in fact, establish better regularity results; see for example \cite{SilvTeix}.
\end{Remark}

\begin{Remark}\label{remarkclass}
We notice that 
\[
	u\,-\,h\,\in\,S\left(\frac{\lambda}{n},\Lambda,f(x)-F(D^2h)\right);
\]
it follows from the fact that $h\in\mathcal{C}^{1,1}$; see \cite[Proposition 2.13]{ccbook}.
\end{Remark}

\section{Proof of Theorem \ref{w2ptheorem}} \label{sct proof main thm}

In the sequel, we pursue a finer geometric-measure analysis of the sets $G_M$. This builds upon the approximation lemma to prove Proposition \ref{prop1}, yielding $W^{2,p}$-regularity for solutions of \eqref{eq1}. Apart from the geometric tangential insight, the analysis carried in this section follows closely the one developed by Caffarelli in \cite{MR1005611}, see also \cite[Chapter 7]{ccbook}.

\begin{Lemma}\label{lemma710}
Let $u\in\mathcal{C}(B_1)$ be a viscosity solution of $F(D^2u)=f(x)$ in $B_{3/4}$ such that $\left\|u\right\|_{L^\infty(B_{3/4})}\leq 1$ and $-|x|^2\leq u(x)\leq |x|^2$ in $B_1\setminus B_{3/4}$. Assume further that the assumptions of Lemma \ref{approx} hold true. Then, 
$$
	\left|G_M(u,B_1)\cap Q_1\right|\geq 1-\rho,
$$
where $M>0$ depends only on $d$ and the choice of $\epsilon$ in Lemma \ref{approx} will be determined by $\rho$.
\end{Lemma}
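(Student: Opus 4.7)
The plan is to adapt Caffarelli's classical approximation argument from \cite{MR1005611} (see also \cite[Lemma 7.10]{ccbook}) to the recession setting, substituting the role of a convex operator with the recession $F^{*}$. I begin by invoking the Approximation Lemma \ref{approx} with a parameter $\delta>0$ to be calibrated against $\rho$. This furnishes an $h\in\mathcal{C}^{1,1}(B_{3/4})$ solving $F^{*}(D^2 h) = 0$ in the viscosity sense with $\|u-h\|_{L^{\infty}(B_{1/2})}\le\delta$. Assumption A\ref{a2} then yields a universal bound $\|D^2h\|_{L^{\infty}}\le N$ on an appropriate interior ball; geometrically, at every such point the graph of $h$ is touched from above by a concave paraboloid and from below by a convex paraboloid, both of universal opening $N$.

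Next I study the difference $w := u-h$. By Remark \ref{remarkclass}, $w\in S(\lambda/n,\Lambda,g)$ in $B_{3/4}$ with $g := f - F(D^2 h)$. The term $\|f\|_{L^p}$ is small by hypothesis, while $\|F(D^2 h)\|_{L^p}$ is small because Lemma \ref{luc} ensures that $F$ is $\epsilon$-close to $F^{*}$ on the bounded set $\{\|M\|\le N\}$ containing the range of $D^2 h$, together with $F^{*}(D^2 h)=0$. Hence $\|g\|_{L^p}$ can be made arbitrarily small by further tightening $\epsilon$. Combining the smallness of $\|w\|_{L^{\infty}}$ and of $\|g\|_{L^p}$, the geometric-measure machinery of Section \ref{sct tools}, applied to $w$ in the spirit of Lemma \ref{lemma78}, provides $|G_{M_0}(w,B_1)\cap Q_1|\ge 1-\rho/2$ for some universal $M_0$.

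The two ingredients are then patched together. At any point of $Q_1$ where $h$ is paraboloid-touched with opening $N$ and $w$ with opening $M_0$, the sum $u=h+w$ is paraboloid-touched with opening $M := N+M_0$. For the portion of $Q_1$ lying outside $B_{3/4}$, the explicit quadratic sandwich $-|x|^2\le u(x)\le |x|^2$ already supplies paraboloid touching of opening $2$ directly, which can be absorbed into $M$. A routine bookkeeping of exceptional measures then delivers $|G_M(u,B_1)\cap Q_1|\ge 1-\rho$, completing the argument.

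The crux of the plan, and what I expect to be the main technical obstacle, is the quantitative control of $\|g\|_{L^p}=\|f-F(D^2 h)\|_{L^p}$. This is precisely where the recession structure is indispensable: the single small parameter $\epsilon$ of Lemma \ref{approx} must simultaneously discipline the source $\|f\|_{L^p}$, the closeness $F\to F^{*}$ from Lemma \ref{luc}, and the sup-norm approximation $\|u-h\|_{\infty}$. Without the $C^{1,1}$-regular $F^{*}$ furnished by Assumption A\ref{a2}, there would be no substitute for the convexity that powers the classical Caffarelli-Cabr\'e estimate, and the synchronization of these three smallnesses is what keeps the present argument intact.
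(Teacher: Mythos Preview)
Your proposal is correct and follows essentially the same route as the paper: invoke Lemma~\ref{approx} to obtain the $C^{1,1}$ function $h$, deduce $Q_1\subset G_N(h,B_1)$ from Assumption~A\ref{a2}, apply the power-decay estimate of Lemma~\ref{lemma78} to (a rescaling of) $w=u-h$, and combine to reach $G_{2N}(u,B_1)$. The paper extends $h$ continuously so that $h=u$ in $B_1\setminus B_{3/4}$ rather than treating that annulus separately as you do, and it tacitly absorbs the control of $F(D^2h)$ into the claim that $\omega:=\rho_0(u-h)$ ``satisfies the assumptions of Lemma~\ref{lemma2}'' --- your explicit appeal to Lemma~\ref{luc}, together with $F^{*}(D^2h)=0$, is precisely what justifies that step in the recession setting.
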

\begin{proof}
Consider the function $h$, from Lemma \ref{approx}, restricted to ${B_{1/2}}$. From standard results in elliptic regularity theory we have $h\in\mathcal{C}^2(\overline{B}_{1/2})$ and $\left\|h\right\|_{\mathcal{C}^{1,1}(\overline{B}_{1,2})}\leq c(d)c_e$. Extend $h$ outside $\overline{B}_{1/2}$ continuously, such that $h=u$ in $B_1\setminus B_{3/4}$ and $\left\|u-h\right\|_{L^\infty(B_1)}=\left\|u-h\right\|_{L^\infty(B_{3/4})}$. Recall that $\left\|u\right\|_{L^\infty(B_{3/4})}=\left\|h\right\|_{L^\infty(B_{3/4})}$.

It follows that $\left\|u-h\right\|_{L^\infty(B_{1/2})}\leq 2$; therefore, $$-2-|x|^2\leq h(x)\leq |x|^2+2$$in $B_1\setminus B_{1/2}$. Hence, there exists $N>0$ so that
\begin{equation}\label{724}
	Q_1\subset G_N(h,B_1).
\end{equation}
Set 
$$
	\omega\doteq\rho_0(u-h),
$$
where $\rho_0>0$ is a constant to be determined later, depending only on the hypotheses of Lemma \ref{approx}. Hence, $\omega$ satisfies the assumptions of Lemma \ref{lemma2}. By applying Lemma \ref{lemma78} one obtains 
$$
	\left|A_t(\omega,B_1)\cap Q_1\right|\leq c_2t^{-\mu},
$$ 
for every $t>0$. Therefore, 
$$
	\left|A_s(u-h,B_1)\cap Q_1\right|\leq C\rho^\mu c_2s^{-\mu},
$$
for some constant $C>0$. It follows that 
$$
	\left|G_N(u-h,B_1)\cap Q_1\right|\geq 1-C_1\rho^\mu\geq 1-\rho_0,
$$
by choosing appropriate constants in Lemma \ref{approx}. Finally, because of \eqref{724}, we get 
$$
	\left|G_{2N}(u,B_1)\cap Q_1\right|\geq 1-\rho_0.
$$
\end{proof}

\begin{Lemma}\label{lemma711}
Let $u\in\mathcal{C}(\Omega)$ be a viscosity solution of $F(D^2u)=f(x)$ in $B_{8\sqrt{d}}$. Assume further that the assumptions of Lemma \ref{approx} hold true. Then, 
$$
	G_1(u,\Omega)\cap Q_3\neq\emptyset \Longrightarrow \left|G_M(u,\Omega)\cap Q_1\right|\geq 1-\rho,
$$
where $M$ and $\rho$ are as in Lemma \ref{lemma710}.
\end{Lemma}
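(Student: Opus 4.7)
The plan is to reduce the statement to Lemma \ref{lemma710} via normalization, and then use the global control supplied by $G_1$ to extend the resulting touching estimate from $B_1$ back to $\Omega$. Pick $x_0\in G_1(u,\Omega)\cap Q_3$. By definition of $G_1$, there are paraboloids of opening $1$ touching $u$ from above and below at $x_0$ throughout $\Omega$; a direct directional argument at $x_0$ forces their tangent affine planes to coincide, yielding a single affine function $L$ satisfying the global control
\[
|u(x)-L(x)|\le\tfrac12|x-x_0|^2\qquad\text{for all }x\in\Omega.\tag{$\star$}
\]

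Now define the normalized function $\bar u(y):=(u(y)-L(y))/K$ on $B_{8\sqrt d}$, where $K=K(d)$ is a dimensional constant to be fixed. Since $L$ is affine, $\bar u$ is a viscosity solution of $\bar F(D^2\bar u)=\bar f$ in $B_{3/4}$, with $\bar F(M):=F(KM)/K$ and $\bar f:=f/K$. The operator $\bar F$ is $(\lambda,\Lambda)$-elliptic and a direct computation yields $\bar F^*(M)=KF^*(M)$, which inherits the $C^{1,1}$ estimates assumed on $F^*$. Using $(\star)$ and the uniform bound $|y-x_0|\le 1+3\sqrt d/2$ for $y\in B_1$, $x_0\in Q_3$, we have $|\bar u(y)|\le C_d/K$ throughout $B_1$. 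Picking $K$ so that $C_d/K\le 9/16$ makes $\bar u$ satisfy all the hypotheses of Lemma \ref{lemma710}: $\|\bar u\|_{L^\infty(B_{3/4})}\le 1$, and $-|y|^2\le\bar u(y)\le|y|^2$ on the annulus $B_1\setminus B_{3/4}$ (since $|y|^2\ge 9/16$ there). Smallness of $\bar f$ in $L^p(B_1)$ follows by appropriately calibrating the smallness of $f$. Lemma \ref{lemma710} then yields $|G_M(\bar u,B_1)\cap Q_1|\ge 1-\rho$.

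Fix $y$ in this set and let $\tilde A$ denote the tangent affine function of the touching paraboloid of opening $M$ for $\bar u$ at $y$ on $B_1$. Setting $\tilde L:=L+K\tilde A$ and multiplying the touching inequality by $K$ produces a paraboloid of opening $KM$ touching $u$ at $y$ on $B_1$. To extend this estimate to all of $\Omega$ with a universal opening, observe that $\|\bar u\|_{L^\infty(B_1)}\le 1$, combined with the touching inequality at $y\in Q_1$, forces $|\tilde A|\le C(M)$ on $B_1$, and hence $|\nabla\tilde A|\le C(M)$ as well (an affine function bounded on the unit ball has a universally bounded gradient). Consequently, for any $y'\in\Omega\setminus B_1$, the global control $(\star)$ gives
\[
|u(y')-\tilde L(y')|\le|u(y')-L(y')|+K|\tilde A(y')|\le\tfrac12|y'-x_0|^2+KC(M)\bigl(1+|y'-y|\bigr).
\]
Using $|y'-x_0|\le|y'-y|+2\sqrt d$, the right-hand side is bounded by $|y'-y|^2+C_d+KC(M)(1+|y'-y|)$. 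Since $|y'-y|\ge\mathrm{dist}(Q_1,\partial B_1)=:c_0>0$ is a positive universal constant on the region under consideration, both the constant and the linear contributions can be absorbed into $(M'/2)|y'-y|^2$ for a universal constant $M'\ge KM$. Hence $y\in G_{M'}(u,\Omega)$, and consequently $|G_{M'}(u,\Omega)\cap Q_1|\ge 1-\rho$.

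The main obstacle is precisely this last step, namely extending the touching paraboloid from $B_1$ out to $\Omega$ while preserving the universality of the opening. The global quadratic control $(\star)$ provided by the $G_1$ hypothesis, combined with the one-point slope estimate for $\tilde A$ coming from the uniform bound $\|\bar u\|_\infty\le 1$, is exactly what makes this extension work with a universal constant $M'$.
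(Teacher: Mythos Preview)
Your normalization step---subtract the affine function $L$ coming from the $G_1$ touching paraboloids and divide by a dimensional constant---is exactly the paper's argument. The difference is in what happens next, and this is where your proof develops a real gap.

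The paper applies Lemma \ref{lemma710} \emph{at the scale of $\Omega$}: after normalizing, the function $v=(u-L_1)/c(d)$ satisfies $-|x|^2\le v(x)\le |x|^2$ on $\Omega\setminus B_{6\sqrt d}$, and Lemma \ref{lemma710} (in its scaled form, as in Caffarelli--Cabr\'e) delivers $|G_M(v,\Omega)\cap Q_1|\ge 1-\rho$ directly. Since the touching already takes place over all of $\Omega$, the passage back to $u$ is the one-line observation $G_M(v,\Omega)\subset G_{c(d)M}(u,\Omega)$. No extension step is required.

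You instead apply Lemma \ref{lemma710} at the unit scale, obtaining only $|G_M(\bar u,B_1)\cap Q_1|\ge 1-\rho$, and then try to upgrade the touching region from $B_1$ to $\Omega$ by hand. The argument you give for this hinges on the claim that $c_0:=\mathrm{dist}(Q_1,\partial B_1)>0$, so that $|y'-y|\ge c_0$ for $y\in Q_1$ and $y'\in\Omega\setminus B_1$. But the vertices of $Q_1$ lie at distance $\sqrt d/2$ from the origin, so $Q_1\not\subset B_1$ as soon as $d\ge 4$; in those dimensions $c_0=0$ and points $y\in Q_1\cap B_1$ can sit arbitrarily close to $\partial B_1$, making your absorption of the constant and linear terms into $(M'/2)|y'-y|^2$ impossible. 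The whole extension paragraph therefore collapses for $d\ge 4$.

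The repair is to do what the paper does: invoke Lemma \ref{lemma710} at the larger scale (with $B_{8\sqrt d}$ playing the role of $B_{3/4}$ and $\Omega$ the role of $B_1$), so that the touching paraboloids are global from the outset and no extension is needed. A minor side remark: your computation of the recession function is off---one has $\bar F_\mu(M)=(\mu/K)F((\mu/K)^{-1}M)$, so in fact $\bar F^*=F^*$, not $KF^*$. This does not affect the logic, since all you need is that $\bar F^*$ inherits the $C^{1,1}$ estimate.
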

\begin{proof}
Take $x_1\in G_1(u,\Omega)\cap Q_3$; then, there exists an affine function $L_1$ so that 
$$
	-\frac{\left|x-x_1\right|^2}{2}\,\leq\,u(x)-L_1(x)\,\leq\,\frac{\left|x-x_1\right|^2}{2}
$$
in $\Omega$. Define 
$$
	v(x)\,\doteq\,\frac{u(x)-L_1(x)}{c(d)}
$$
and take $c(d)$ large enough so that $\left\|v\right\|_{L^\infty(8\sqrt{d})}\leq 1$ and 
$$
	-|x|^2\leq v(x)\leq |x|^2
$$
in $\Omega\setminus B_{6\sqrt{d}}$. By noticing that $v$ solves 
$$
	\frac{1}{c(d)}F(c(d)D^2v)=\frac{f(x)}{c(d)}
$$ 
in $B_{8\sqrt{d}}$ and recurring to Lemma \ref{lemma710}, one obtains $\left|G_M(v,\Omega)\cap Q_1\right|\geq 1-\rho$. Therefore, $\left|G_{c(d)M}(u,\Omega)\cap Q_1\right|\geq 1-\rho$.
\end{proof}

\begin{Lemma}\label{lemma712}
Let $0<\epsilon_0<1$ and $u$ be a viscosity solution of $F_\mu(D^2u)=f(x)$ in $B_{8\sqrt{d}}$. Assume that 
$$
	\left\|u\right\|_{L^\infty(B_{8\sqrt{d})}}\leq \epsilon\;\;\;\;\;\;\mbox{and}\;\;\;\;\;\;\left\|f\right\|_{L^d(B_{8\sqrt{d})}}\leq \epsilon,
$$
where $\mu,\,\epsilon<\epsilon_0$. Extend $f$ by zero outside $B_{8\sqrt{d}}$ and set 
$$
	A\doteq A_{M^{k+1}}(u,B_{8\sqrt{d}})\cap Q_1,
$$
$$
	B\doteq (A_{M^k}(u,B_{8\sqrt{d}})\cap Q_1)\cup\left\lbrace x\in Q_1\,|\,m(f^d)(x)\geq (c_3M^k)^d \right\rbrace.
$$
Then, 
$$
	|A|\leq \epsilon_0|B|,
$$
where $M>1$ depends only on $d$ and $c_e$ and $c_3,\,\epsilon>0$ depend only on $d$, $c_e$, $\epsilon_0$, $\lambda$ and $\Lambda$.
\end{Lemma}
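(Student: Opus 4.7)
The plan is to apply the Calder\'on--Zygmund decomposition of Lemma \ref{lemma42} with $\delta = \epsilon_0$. This reduces the argument to verifying two properties: first, the global bound $|A| \leq \epsilon_0$; second, the \emph{predecessor property} that whenever a dyadic cube $Q \subset Q_1$ satisfies $|A \cap Q| > \epsilon_0|Q|$, its dyadic predecessor $\tilde Q$ is contained in $B$.

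The first bound is immediate from Lemma \ref{lemma711}: choosing $\epsilon$ sufficiently small, the smallness of $\|u\|_{L^\infty}$ and $\|f\|_{L^d}$ ensures that, say, the origin lies in $G_1(u, B_{8\sqrt d}) \cap Q_3$, and Lemma \ref{lemma711} yields $|A_M(u, B_{8\sqrt d}) \cap Q_1| \leq \rho$, which we arrange to be at most $\epsilon_0$. Since $A \subset A_M(u, B_{8\sqrt d}) \cap Q_1$ for every $k \geq 0$ and $M > 1$, the global bound follows.

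The heart of the proof is the predecessor property, which I establish by contrapositive: if $\tilde Q \not\subset B$, then $|A \cap Q| \leq \epsilon_0|Q|$. Under that hypothesis, there exists $x_0 \in \tilde Q$ simultaneously in $G_{M^k}(u, B_{8\sqrt d})$ and satisfying $m(f^d)(x_0) < (c_3 M^k)^d$. Let $i$ denote the dyadic generation of $Q$, and let $L$ be the affine function provided by the paraboloid bound at $x_0$. I apply the rescaling suggested by \eqref{utilde},
$$\tilde u(y) \doteq \frac{2^{2i}}{M^k}\bigl[u(x_0 + 2^{-i}y) - L(x_0 + 2^{-i}y)\bigr],$$
defined on $B_{8\sqrt d}$. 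A direct chain-rule computation shows that $\tilde u$ is a viscosity solution of $F_{\tilde \mu}(D^2 \tilde u) = \tilde f$ with $\tilde \mu = \mu/M^k < \epsilon_0$ and $\tilde f(y) = M^{-k} f(x_0 + 2^{-i}y)$; crucially, $F_{\tilde\mu}$ has the same ellipticity constants as $F$. The paraboloid estimate at $x_0$ delivers $\|\tilde u\|_{L^\infty(B_{8\sqrt d})} \leq C(d)$, and the change of variables combined with the maximal-function bound at $x_0$ yields
$$\|\tilde f\|_{L^d(B_{8\sqrt d})}^d \leq C(d)\, \frac{m(f^d)(x_0)}{M^{kd}} \leq C(d)\, c_3^d.$$
After an additional normalization (absorbed into the choice of $c_3$ and $\epsilon$), $\tilde u$ meets the hypotheses of Lemma \ref{lemma711}, which produces $|A_M(\tilde u, B_{8\sqrt d}) \cap Q_1| \leq \rho \leq \epsilon_0$. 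Since paraboloid openings rescale as $T \leftrightarrow T/M^k$ and Lebesgue measure scales by the Jacobian $2^{-id} = |Q|$, translating back to the original coordinates yields $|A \cap Q| \leq \epsilon_0|Q|$, as desired.

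The main obstacle I anticipate is the careful book-keeping of the rescaling: verifying that the preimage of $B_{8\sqrt d}$ in $y$-coordinates sits inside the original domain where $u$ is defined (which uses $x_0 \in \tilde Q \subset Q_1$), and that all universal constants $M, c_3, \epsilon, \rho$ can be chosen independently of $k$ and $i$. The geometric-tangential flexibility is essential here: the scaling $F \mapsto F_{\tilde \mu}$ preserves the ellipticity constants while keeping $\tilde \mu < \epsilon_0$ at every dyadic scale, so that the approximation Lemma \ref{approx} (and hence Lemma \ref{lemma711}) applies uniformly in $k$.
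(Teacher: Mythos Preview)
Your proposal is correct and follows essentially the same route as the paper: apply Lemma~\ref{lemma42}, verify the global bound, and establish the predecessor property by rescaling via \eqref{utilde} so that Lemma~\ref{lemma711} applies to $\tilde u$ solving $F_{\mu/M^k}(D^2\tilde u)=\tilde f$. One small correction: your justification of the global bound $|A|\le\epsilon_0$ is not quite right---smallness of $\|u\|_{L^\infty}$ alone does \emph{not} guarantee that a paraboloid of opening $1$ touches $u$ at the origin (the touching condition fails near $x_0$); the paper instead invokes Lemma~\ref{lemma710} directly, using that $|u|\le\epsilon\le 1\le|x|^2$ in the annulus $B_{8\sqrt d}\setminus B_{6\sqrt d}$ to get $|G_M(u,B_{8\sqrt d})\cap Q_1|\ge 1-\rho$.
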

\begin{proof}
The proof is based on the Lemma \ref{lemma42}. Notice that $|u|\leq 1\leq |x|^2$ in $B_{8\sqrt{d}}\setminus B_{6\sqrt{d}}$. Hence, by setting $\Omega\equiv B_{8\sqrt{d}}$ in Lemma \ref{lemma710}, one obtains 
$$
	\left|G_{M^{k+1}(u,B_{8\sqrt{d}})\cap Q_1}\right|\,\geq\,\left|G_M(u,B_{8\sqrt{d}})\right|\,\geq\,1-\rho.
$$ 
To conclude the proof, we have to check that if $Q$ is a dyadic cube of $Q_1$ such that 
\begin{equation}\label{726}
	\left|A_{M^{k+1}}(u,B_{8\sqrt{d}})\cap Q\right|\,\geq\,\left|A\cap Q\right|\,\geq\,\rho\left|Q\right|,
\end{equation}
then $\widetilde{Q}\subset B$. Suppose not, i.e., there exists $x_1\in\widetilde{Q}$ so that
\begin{equation}\label{727}
	x_1\,\in\,\widetilde{Q}\cap G_{M^k}(u,B_{8\sqrt{d}}),
\end{equation} 
and
\begin{equation}\label{728}
	m\left(f^d\right)(x_1)\,\leq\,\left(c_3M^k\right)^d.
\end{equation}
Before we proceed, we notice that if $Q$ is a dyadic cube of $Q_1$, we have $Q=Q_{1/2^i}(x_0)$ for some $i\geq 0$ and $x_0\in Q_1$. Define $\tilde{u}$ by \eqref{utilde} and let $\widetilde{\Omega}$ 
%as in Lemma \ref{lemma77}, 
be the image of $\Omega$ by 
\begin{equation}\label{trans}
	x\,=\,x_0\,+\,\frac{1}{2^i}y,
\end{equation}
where we take $\Omega$ to be $B_{8\sqrt{d}}$. It remains to check that $\tilde{u}$ satisfies the hypotheses of Lemma \ref{lemma711}.
Because $B_{8\sqrt{d}/2^i}(x_0)\subset B_{8\sqrt{d}}$, one has $B_{8\sqrt{d}}\subset \widetilde{\Omega}$. Hence, $\tilde{u}$ is a viscosity solution of 
$$
	G(D^2\tilde u)\,=\,\tilde{f},$$where $$G(D^2w)=\frac{1}{M^k}F\left(M^kD^2w\right),
$$
and 
$$
	\tilde{f}(y)=\frac{1}{M^k}f\left(x_0+\frac{1}{2^i}y\right).
$$

Notice that $\left|x_1-x_0\right|_\infty\leq 3/2^{i+1}$ yields $B_{8\sqrt{d}/2^i}(x_0)\subset Q_{19\sqrt{d}/2^i}(x_1)$. Hence, \eqref{728} implies
\[
	\left\|\tilde{f}\right\|^d_{L^d(B_{8\sqrt{d}})}\leq \frac{2^{2i}}{M^{kd}}\int_{Q_{19\sqrt{d}/2^i}(x_1)}\left|f(x)\right|^ddx\leq C_dc_3^d;
\]
by taking $c_3$ small enough, it follows that 
\[
	\left\|\tilde{f}\right\|_{L^d(B_{8\sqrt{d}})}\leq \rho.
\]
Finally, because of \eqref{727}, we also have $G_1(\tilde{u},\widetilde{\Omega})\cap Q_3\neq\emptyset$. Therefore,
\[
	\left|G_M(\tilde{u},\widetilde{\Omega})\cap Q_1\right|\,\geq\, (1-\rho)|Q_1|,
\]
that is,
\[
	\left|G_{M^{k+1}}(u,B_{8\sqrt{d}})\cap Q\right|\,\geq\,(1-\rho)|Q|,
\]
which leads to a contradiction in face of \eqref{726}, finishing the proof.
\end{proof}

Next we present the proof of Proposition \ref{prop1}.

\begin{proof}[Proof of Proposition \ref{prop2}]
Let $M$ be as in Lemma \ref{lemma712} and take $\rho$ such that $$\rho M^p\,=\,\frac{1}{2}.$$ For $k\geq 0$, set 
\[
	\alpha_k\doteq\left|A_{M^k}(u,B_{8\sqrt{d}})\cap Q_1\right|,\;\;\;\beta_k\doteq\left|\left\lbrace x\in Q_1\,|\,m(f^d)(x)\geq\left(c_3M^k\right)^d\right\rbrace\right|.
\]
Lemma \ref{lemma712} implies that $\alpha_{k+1}\leq \rho(\alpha_k+\beta_k)$, which leads to 
\begin{equation}\label{729}
	\alpha_k\,\leq\,\rho^k\,+\,\sum_{i=0}^{k-1}\rho^{k-i}\beta_i.
\end{equation}
Clearly, $f^d\in L^{p/d}$; hence, $m(f^d)\in L^{p/d}$ and 
$$
\left\|m(f^d)\right\|_{L^{p/d}}\leq C\left\|f\right|^d_{L^p}\leq C.
$$
Therefore, by Lemma \ref{lemma1}, we have
\begin{equation}\label{730}
	\sum_{k\geq 0}M^{pk}\beta_k\,\leq\, C.
\end{equation}

On the other hand, we have 
$$
	\mu_\Theta(t)\,\leq\,\left|A_t(B_{1/2})\right|\,\leq\,\left|A_t(B_{1/2})\cap Q_1\right|.
$$ 
By recurring once again to Lemma \ref{lemma1}, it suffices to verify that
\[
	\sum_{k\geq 1}M^{pk}\alpha_k\,\leq\,C,
\]
which follows from \eqref{729} and \eqref{730}, as follows:
\begin{align*}
	\sum_{k\geq 1}M^{pk}\alpha_k&\leq\sum_{k\geq 1}(\rho M^p)^k\,+\,\sum_{k\geq 1}\sum_{i=0}^{k-1}\rho^{k-i}M^{p(k-i)}M^{pi}\beta_i\\
	&=\sum_{k\geq 1}2^{-k}+\left(\sum_{i\geq 0}M^{pi}\beta_i\right)\left(\sum_{j\geq 1}2^{-j}\right)\leq C.
\end{align*}
\end{proof}
%%%%%%
\section{Estimates for variable coefficient equations} \label{sct var coeff}

In this section, we comment on a generalization of Theorem \ref{w2ptheorem} for fully nonlinear elliptic operators with variable coefficients, $F \colon B_1\times \mathcal{S}(d) \to \mathbb{R}$. Herein we assume assumption A\ref{a1} holds uniformly in $x\in B_1$, that is, for any $x \in B_1$ and any $M\in\mathcal{S}(d)$, there holds
$$
	\lambda\left\|N\right\|\,\leq\,F(x, M+N)\,-\,F(x, M)\,\leq\,\Lambda\left\|N\right\|,
$$
for every $N\,\geq\,0$. We also assume $F(x,0) = 0$. Following Caffarelli \cite{MR1005611}, we measure the oscillation of the coefficients of an elliptic operator $F \colon B_1\times \mathcal{S}(d) \to \mathbb{R}$ around a point $x_0 \in B_1$ by
$$
	\beta_F(x,x_0) := \sup\limits_{M\in \mathcal{S}(d)\setminus \{0\}} \dfrac{\left |F(x, M) - F(x_0, M) \right |}{\|M\|}.
$$	
Now, for any $\mu>0$ and any $(x,M) \in B_1\times \mathcal{S}(d)$, we define 
$F_\mu(x,M) := \mu F(x, \mu^{-1}M)$ and the recession function
$$
	F^{*}(x,M) := \lim\limits_{\mu \to 0} F_\mu (x,M),
$$ 
provided it exists for all pair $(x,M) \in B_1\times \mathcal{S}(d)$. While it is plain to check that
$$
	\beta_F(x,x_0) = \beta_{F_\mu}(x,x_0),
$$
the oscillation of the recession function can indeed be smaller than the original one. This is the case, for instance, when the operator satisfies $F(x,M) = G(M)$, for all $M \in \mathcal{S}(d) \setminus B_R$ (see for instance construction from section \ref{sct density}). Indeed, the recession function $F^{*}(x,M)$ is simply  $G^{*}(M)$; a constant coefficient operator. 

\begin{teo}\label{w2,p for var coeff} Let $F\colon B_1\times \mathcal{S}(d) \to \mathbb{R}$ be uniformly elliptic, $p>d$, and $u$ be a viscosity solution of 
$$
	F(x, D^2u)\,=\,f(x), \quad \text{in } B_1.
$$ 
Assume $F^{*}(x,M)$ is uniformly convex with respect to $M$ and for some constants $\alpha > 0$ and $C>0$, there holds
\begin{equation}\label{hyp calpha coeff}
	\intav{B_r} \beta_{F^{*}}(x, x_0)^d dx \le Cr^{d\alpha},
\end{equation}
for $r\le r_0 \ll 1$ and all $x_0 \in B_1$. Then $u\in W^{2,p}(B_{1/2})$ and there exists $C>0$ so that
\[
	\left\|u\right\|_{W^{2,p}(B_{1/2})}\,\leq\,C\left(\left\|u\right\|_{L^{\infty}(B_{1})}\,+\,\left\|f\right\|_{L^{p}(B_{1})}\right).
\]
\end{teo}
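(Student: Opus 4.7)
The plan is to combine the geometric tangential strategy used to prove Theorem \ref{w2ptheorem} with Caffarelli's framework for variable coefficient equations from \cite{MR1005611}. The starting observation is that at each $x_0 \in B_1$, the frozen recession operator $M \mapsto F^{*}(x_0, M)$ is uniformly convex and $(\lambda, \Lambda)$-elliptic, so by the Evans-Krylov theorem it has universal a priori $C^{1,1}$ estimates with constant $C_*$ independent of $x_0$. Thus Assumption A\ref{a2} is in force pointwise in $x$, and the hypothesis \eqref{hyp calpha coeff} supplies a pointwise Morrey control on how $F^{*}$ deviates from its frozen versions. The tangential path used in Section \ref{gtm} should now be run with a two-parameter target: as $\mu \to 0$ and $r \to 0$, the operator $F_\mu(x_0 + r\,\cdot\,, M)$ must be steered towards the constant coefficient, convex, $F^{*}(x_0, M)$.

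The first real step is a variable coefficient approximation lemma, in the spirit of Lemma \ref{approx}. Namely, given $\delta>0$, there should exist $\epsilon = \epsilon(\delta, d, \lambda, \Lambda)>0$ so that if $\|u\|_{L^\infty(B_1)}\le 1$ and $u$ is a viscosity solution of $F_\mu(x, D^2 u) = f$ in $B_1$ with $\mu < \epsilon$, $\|f\|_{L^p(B_1)} < \epsilon$, and
\[
\intav{B_1} \beta_{F^{*}}(x, 0)^d\,dx \le \epsilon^d,
\]
then there exists $h \in C^{1,1}_{\rm loc}(B_{3/4})$ solving $F^{*}(0, D^2 h) = 0$ with $\|u-h\|_{L^\infty(B_{1/2})}\le \delta$. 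I would prove this by contradiction and compactness: a failing sequence $(u_j, f_j, F^{(j)}, \mu_j)$ would, up to subsequence, produce a limit $u_\infty \in C^{0,\alpha}_{\rm loc}$; by the Morrey smallness of $\beta_{F^{*}}$ and uniform ellipticity, $F_{\mu_j}^{(j)}(x, \cdot) \to F^{*}(0, \cdot)$ uniformly on compacts of $B_{3/4}\times \mathcal{S}(d)$ (off a set of vanishing measure), and stability of viscosity solutions delivers $F^{*}(0, D^2 u_\infty) = 0$, contradicting the separation assumption.

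The next step is to reproduce Lemmas \ref{lemma710}, \ref{lemma711}, \ref{lemma712} verbatim, but driven by this new approximation lemma. Here the recession operator brings in an $F^{*}$-harmonic replacement (whose $C^{1,1}$ norm is uniformly bounded thanks to Evans-Krylov applied to the frozen convex $F^{*}(x_0, \cdot)$), and one then uses the paraboloid covering and the Calderón-Zygmund decomposition (Lemma \ref{lemma42}) exactly as in Section \ref{sct proof main thm}. The difference with Section \ref{sct proof main thm} is that, as we pass to a dyadic subcube $Q_{2^{-i}}(x_0)$ and rescale by \eqref{utilde}, the rescaled operator becomes $G_\mu(y, M) = \mu F(x_0 + 2^{-i}y, \mu^{-1} M)$, whose recession $G^{*}(y, M) = F^{*}(x_0 + 2^{-i} y, M)$ inherits the oscillation bound
\[
\intav{B_1} \beta_{G^{*}}(y, 0)^d\,dy = \intav{B_{2^{-i}}(x_0)} \beta_{F^{*}}(x, x_0)^d\,dx \le C\,2^{-id\alpha},
\]
which is smaller than $\epsilon^d$ provided the initial scale is small enough. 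Combining the resulting geometric decay of $|A_{M^k}(u, B_{8\sqrt d})\cap Q_1|$ with the maximal function estimate $m(f^d)\in L^{p/d}$ and summing via Lemma \ref{lemma1} yields \eqref{w2p est}.

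The main obstacle, I expect, is the coupled choice of parameters in the approximation lemma together with the dyadic iteration: one must fix $\delta$ small (to generate $W^{2,p}$ summability with exponent $p>d$), then $\epsilon$ small accordingly, and finally the base scale $r_0$ must be small enough that at every dyadic level the rescaled $\beta_{F^{*}}$-Morrey norm sits below $\epsilon^d$. The decay $Cr^{d\alpha}$ from \eqref{hyp calpha coeff} is exactly what drives this, but the bookkeeping has to be done carefully so the approximation lemma applies uniformly at every scale and every center $x_0 \in B_{1/2}$. Once this uniform scale-invariant setup is in place, the Caffarelli-Calderón-Zygmund machinery of Section \ref{sct proof main thm} produces the estimate with only cosmetic modifications.
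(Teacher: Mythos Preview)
Your proposal is correct and follows the same geometric tangential strategy as the paper, but it differs from the paper's argument in one structural point worth noting. In the paper the approximating function $h$ is taken to solve the \emph{variable coefficient} recession equation $F^{*}(x, D^2 h) = 0$ in $B_{3/4}$, and the required $C^{2,\beta}$ interior estimate for $h$ is obtained by combining the uniform convexity of $F^{*}$ in $M$ with the oscillation hypothesis \eqref{hyp calpha coeff} via Caffarelli's Schauder theory \cite{MR1005611}. You instead freeze the recession operator at the center point and approximate by a solution of $F^{*}(x_0, D^2 h) = 0$, drawing the $C^{1,1}$ bound directly from Evans--Krylov; the Morrey smallness of $\beta_{F^{*}}$ then enters as an additional small parameter in the compactness argument itself. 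Both routes lead to Lemmas \ref{lemma710}--\ref{lemma712} with only cosmetic changes. The paper's choice keeps the approximation lemma a two-parameter statement ($\mu$ and $\|f\|_{L^p}$ small) at the expense of invoking variable-coefficient Schauder theory; your choice is more self-contained (only Evans--Krylov is needed) but requires tracking a third small parameter through the dyadic iteration, exactly the bookkeeping you flag at the end. One small caution: in your contradiction sequence $(u_j, f_j, F^{(j)}, \mu_j)$ the frozen limits $(F^{(j)})^{*}(0,\cdot)$ need not stabilize to a single operator, so the limiting equation is $G(D^2 u_\infty)=0$ for some subsequential limit $G$ of these convex $(\lambda,\Lambda)$-elliptic operators, not literally $F^{*}(0,\cdot)$; this is harmless since $G$ inherits convexity and ellipticity and hence the same universal $C^{1,1}$ estimate.
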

\begin{proof}
	The proof of Theorem \ref{w2,p for var coeff} follows closely the reasoning from section \ref{sct proof main thm}. The key observation is that the version of Lemma \ref{approx} for operators with variable coefficients can be proven by similar analysis. Indeed,  if $u\in\mathcal{C}(B_1)$ is a viscosity solution of $F_\mu(x,D^2u)\,=\,f(x)$ in $B_1$, with $\left\|u\right\|_{L^\infty(B_1)}\leq 1$, then given $\delta>0$, there exists $\epsilon>0$ such that, if 
$$
	\left\|f\right\|_{L^p(B_1)}\,\leq\, \epsilon\;\;\;\;\;\;\mbox{and}\;\;\;\;\;\;\mu\,<\,\epsilon,
$$
we can find a function $h\in \mathcal{C}^{2, \, \beta}(B_{3/4})$, satisfying
\begin{equation}\label{eqaph1}
	F^*(x, D^2h)\,=\,0\;\;\;\;\mbox{in}\;\;B_{3/4},
\end{equation}
and 
$$
	\left\|u-h\right\|_{L^\infty(B_{1/2})}\leq \delta.
$$
The same contradiction argument employed in the proof of Lemma \ref{approx} assures the existence of a solution $h$ that is $\delta$-close to $u$. That $h$ is of class  $\mathcal{C}^{2, \, \beta}(B_{3/4})$ follows from hypothesis \eqref{hyp calpha coeff} combined with Caffarelli's Schauder regularity theory, \cite{MR1005611}. Owing to the existence of $h$, as well as its universal regularity theory beyond $C^{1,1}$, one can continue the analysis of section \ref{sct proof main thm}, with minor modifications. For example, as regards the proofs of Lemmas \ref{lemma710} and \ref{lemma711}, these remain the same. Lemma \ref{lemma712} must accommodate condition \eqref{hyp calpha coeff} as an assumption. The geometric-measure arguments from section \ref{sct tools} conclude the proof.
\end{proof}

As a final remark, condition \eqref{hyp calpha coeff} refers to a sort of H\"older continuity of the coefficients in $L^d$ sense. If necessary, one can relax such a hypothesis to weaker continuity assumption on $\beta_{F^{*}}$, namely Dini continuity suffices. We do not want to enter in this issue here other than mentioning that indeed Theorem  \ref{w2,p for var coeff} yields novel information even for convex equations, as the oscillation of the coefficients of the recession function can be strictly less then the original operator.
%%%%%%
\section{A priori BMO type estimates} \label{sct BMO}

In this section, we discuss the borderline case $p=\infty$. It is well established that boundedness of the source function $f$ does not imply, in general, that the hessian is bounded, even if $F$ is the laplacian operator. That is, sharp $W^{2,p}$-regularity estimates fail in the limit case $p=\infty$. 
Recall a function $g$ is said to belong to the $p$-BMO space if
\begin{equation}
	\sup_{\rho>0}\frac{1}{\rho^d}\int_{B_\rho}\left|g(x)- \langle g \rangle_\rho \right|^pdx\,\leq\,C,
\end{equation}
for a constant $C>0$ independent of $\rho$. Hereafter in the paper
$$
	\langle g \rangle_\rho := \intav{B_\rho}{g(x) dx}.
$$

The ultimate goal of this section is to show that solutions of \eqref{eq1} have hessians in $p-$BMO$(B_{1/2})$, for every $p>d$,  provided $f$ is in $p-$BMO$(B_{1})$ and $F=F^{*}$ outside a large ball $B_K \subset \mathcal{S}(d)$. That is, in this section we work under the extra assumption: 

\begin{Assumption}\label{a4} 
	There exists a constant $L\gg 1$, such that $F = F^{*}$ for all $M \in \mathcal{S}(d),$ with $\|M\| \ge L$. The recession function $F^{*}$ has a priori $C^{2,\alpha}$ interior estimates.
\end{Assumption}

Assumptions A\ref{a4} and A\ref{a2} are typical of some geometric PDEs, where convexity of $c$-level sets are verified for $c\gg 1$. This is also the case of special approximating operators, as the ones we will build up in section \ref{sct density}. The main result of this section is then:

\begin{teo} \label{p-BMO est}
Let $u$ be a viscosity solution of \eqref{eq1}, assume  A\ref{a1}-A\ref{a4} are satisfied, and $f\in p-$BMO$(B_1)$, for some $p>d$. Then, $D^2u\in p-$BMO$(B_{1/2})$ and
\[
	\left\|D^2u\right\|_{p-BMO(B_{1/2})}\leq C\left(\left\|u\right\|_{L^\infty(B_1)}\,+\,\left\|f\right\|_{p-BMO(B_1)}\right).
\]
\end{teo}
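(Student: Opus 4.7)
The plan is to use a Campanato-type characterization of $p$-BMO: it suffices to produce, for each $x_0 \in B_{1/2}$ and each $r \in (0, r_0)$, a symmetric matrix $N = N_{x_0,r}$ such that
$$\intav{B_r(x_0)} |D^2u - N|^p\, dx \le C\left(\|u\|_{L^\infty(B_1)} + \|f\|_{p\text{-BMO}(B_1)}\right)^p,$$
with $C$ universal; the defining $p$-BMO bound (with $N$ replaced by $\langle D^2 u \rangle_{B_r(x_0)}$) follows from this up to a universal constant. My $N_{x_0,r}$ will be the Hessian of a quadratic polynomial approximating $u$ at scale $r$, produced by iterating a one-step decay lemma at dyadic scales $\rho_k := \tau^k$.

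The decisive step is a one-step decay: there exist universal $\tau \in (0,1/2)$ and $\epsilon_0 > 0$ such that if $u$ is a viscosity solution of $F(D^2u) = f$ in $B_1$ with $\|u\|_{L^\infty(B_1)} \le 1$ and $\|f - \langle f\rangle_{B_1}\|_{L^p(B_1)} \le \epsilon_0$, then there is a quadratic polynomial $P$ with universally bounded coefficients satisfying $F(D^2P) = \langle f\rangle_{B_1}$ and $\|u - P\|_{L^\infty(B_\tau)} \le \tau^{2+\alpha/2}$. I would prove this by a contradiction/compactness argument modeled on Lemma \ref{approx}: any negating sequence yields, in the limit, a profile $h$ solving $F^*(D^2h) = c$ for a bounded constant $c$. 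Hypothesis A\ref{a4}, by upgrading A\ref{a2} to $C^{2,\alpha}$ estimates for $F^*$ and forcing $F = F^*$ in the large-Hessian regime, yields interior $C^{2,\alpha}$ estimates for such an $h$; $P$ is then the second-order Taylor polynomial of $h$ at the origin, with $\tau$ chosen small enough that the pointwise decay $|h(x) - P(x)| \le C|x|^{2+\alpha}$ dominates the $L^\infty$-approximation error.

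The iteration then proceeds at dyadic scales. Setting inductively $u_k(y) := \rho_k^{-2}(u(x_0 + \rho_k y) - P_{k-1}(\rho_k y))$, the rescaled function satisfies $\widetilde F_k(D^2u_k) = \widetilde f_k(y)$ for the shifted operator $\widetilde F_k(M) := F(D^2P_{k-1} + M) - F(D^2P_{k-1})$ and a rescaled source $\widetilde f_k$ whose normalized $L^p$-oscillation is uniformly bounded by $\|f\|_{p\text{-BMO}(B_1)}$ at every scale---this is precisely the content of the BMO hypothesis. Crucially, $\widetilde F_k$ has recession $\widetilde F_k^* = F^*$ and inherits A\ref{a4} with a possibly enlarged $L$, provided $|D^2P_{k-1}|$ stays bounded; the latter is secured by the iterative bound $|D^2P_k - D^2P_{k-1}| \le C\|f\|_{p\text{-BMO}(B_1)}$ supplied by the one-step lemma. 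Iteration produces polynomials $P_k$ with $\|u - P_k\|_{L^\infty(B_{\rho_k}(x_0))} \le \rho_k^{2+\alpha/2}$, and combining this with Theorem \ref{w2ptheorem} applied to $u - P_k$ on $B_{\rho_k}(x_0)$ (after the natural rescaling) yields the Campanato bound with $N_{x_0, \rho_k} := D^2P_k$; interpolating between consecutive dyadic scales covers every $r$.

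The main obstacle I expect is the one-step lemma. Lemma \ref{approx} was stated with $\|f\|_{L^p}\to 0$, producing a homogeneous limit equation; here the compactness argument must accommodate a bounded, non-vanishing limiting right-hand side $F^*(D^2 h) = c$, and the limit profile must be regular enough to be approximated by a quadratic polynomial with $|h(x) - P(x)| = O(|x|^{2+\alpha})$. This is exactly where A\ref{a4} is essential rather than merely A\ref{a2}: the $C^{1,1}$ estimates of A\ref{a2} control only the pointwise size of $D^2h$ and would not deliver the quadratic Taylor approximation needed to close the iteration. A secondary bookkeeping point is that the shifted operators $\widetilde F_k$ must inherit the structural hypotheses with universal constants across all scales, which follows from the boundedness of $D^2P_k$ maintained by the iteration.
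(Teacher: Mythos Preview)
Your overall strategy---one-step compactness lemma, dyadic iteration with shifted operators, then Theorem~\ref{w2ptheorem} to close the Campanato bound---matches the paper's, but there is a genuine gap in how you set up the compactness step. As you state the one-step lemma (for the \emph{fixed} operator $F$, with only $\|f-\langle f\rangle_{B_1}\|_{L^p}$ small), a negating sequence consists of solutions to $F(D^2u_j)=f_j$; passing to the limit yields $h$ solving $F(D^2h)=c$, not $F^*(D^2h)=c$. Nothing in A\ref{a4} gives $F$ itself $C^{2,\alpha}$ estimates---only $F^*$ has them---so you cannot extract the second-order Taylor polynomial you need. The paper's remedy is an initial dilation $v(x)=\delta_1 u(\delta_2 x)$ that places the equation in the form $F_\mu(D^2v)=\tilde f$ with $\mu$ small; the compactness argument of Proposition~\ref{l:compactness} then runs along a sequence with $\mu_j\to0$, and the limit is genuinely governed by $F^*$.

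This same $\mu$-dilation dissolves your circular worry about keeping $|D^2P_{k-1}|$ bounded. Your increment bound $|D^2P_k-D^2P_{k-1}|\le C$ does not stop $|D^2P_k|$ from growing like $Ck$, and your claim that $\widetilde F_k$ inherits A\ref{a4} is in fact false: for large $\|M\|$ one computes $\widetilde F_k(M)=F^*(M+D^2P_{k-1})-F(D^2P_{k-1})$, which equals $\widetilde F_k^*(M)=F^*(M)$ only if $F^*$ is additive. What rescues the paper's iteration is that A\ref{a4} yields $|F_\mu-F^*|\le 2\Lambda\mu L$ \emph{uniformly} on $\mathcal{S}(d)$, so the shifted operators $F_\mu(\cdot+M_m)$ stay uniformly close to $F^*(\cdot+M_m)$ regardless of $|M_m|$; since $F^*(\cdot+M_m)$ has the same ellipticity and hence the same universal $C^{2,\alpha}$ estimates, the one-step applies at every scale with no a priori control on $|M_m|$. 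Accordingly the paper iterates only the decay $\sup_{B_{r^k}}|v-P_k|\le r^{2k}$---exactly what BMO requires---rather than your $\rho_k^{2+\alpha/2}$, which if propagated would overshoot to $C^{2,\alpha/2}$.
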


It is worth commenting that Theorem \ref{p-BMO est} closely relates to, and generalizes to some extent, developments reported in \cite{MR1978880} (c.f. \cite[Theorem A]{MR1978880}).  The proof of Theorem \ref{p-BMO est} will be divided into two steps. Initially we show the existence of {good} approximating paraboloids, $P_\rho$, and in the sequel we apply Theorem \ref{w2ptheorem} for a normalized sequence of functions related to the problem. The analysis here is similar to the one put forward in \cite{SilvTeix} and so we just comment on the necessary modifications. Hereafter constants that depend only on the ellipticity, dimension and  $C^{2,\alpha}$ interior estimates for $F^{*}$ will be called universal. 

\begin{Proposition} \label{l:compactness} Under the assumptions of Theorem \ref{p-BMO est}, there exist two positive universal constants, $\mu_0$ and $r$, such that if $u$ is a solution of $F_\mu(D^2u) = f$, with $\mu + \|f\|_{L^p} \leq \mu_0$ and $||u||_{L^\infty}\leq 1$, then, there exists a paraboloid $P$, with universal controlled norm $\|P\| \leq C$ satisfying  
$$
	\sup\limits_{B_r} |u-P| \leq r^2.
$$
\end{Proposition}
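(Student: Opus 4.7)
The plan is to combine the Approximation Lemma (Lemma \ref{approx}) with the $\mathcal{C}^{2,\alpha}$ interior regularity for $F^{*}$ granted by Assumption A\ref{a4}, then read off the desired paraboloid $P$ as the second-order Taylor expansion at the origin of the approximating $F^{*}$-harmonic function.

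The first step is to apply Lemma \ref{approx} to $u$. Given any $\delta>0$ (to be fixed later) there exists $\epsilon=\epsilon(\delta)$ such that, whenever $\mu + \|f\|_{L^p(B_1)}\le\epsilon$, one finds $h\in\mathcal{C}^{1,1}(B_{3/4})$ solving $F^{*}(D^2 h)=0$ in $B_{3/4}$ with $\|u-h\|_{L^\infty(B_{1/2})}\le\delta$. Since $\|h\|_{L^\infty(B_{1/2})}\le 1+\delta\le 2$, Assumption A\ref{a4} upgrades $h$ to $\mathcal{C}^{2,\alpha}(B_{1/4})$ with universally controlled norm, say $\|h\|_{\mathcal{C}^{2,\alpha}(B_{1/4})}\le C_\ast$.

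The second step is to define the paraboloid
\[
P(x) := h(0)+\nabla h(0)\cdot x + \tfrac{1}{2}\,x^{\top} D^2 h(0)\, x,
\]
whose norm $\|P\|$ is controlled by $C_\ast$, hence universally. Taylor's theorem gives $|h(x)-P(x)|\le C_\ast|x|^{2+\alpha}$ for $x\in B_{1/4}$, and therefore
\[
\sup_{B_r}|u-P| \;\le\; \|u-h\|_{L^\infty(B_{1/2})} + \sup_{B_r}|h-P| \;\le\; \delta + C_\ast r^{2+\alpha},
\]
for every $0<r\le 1/4$. Now fix a universal $r\in(0,1/4)$ so small that $C_\ast r^{\alpha}\le 1/2$, then choose $\delta := r^2/2$, and finally set $\mu_0 := \epsilon(\delta)$ from the Approximation Lemma. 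Under $\mu+\|f\|_{L^p(B_1)}\le\mu_0$ this yields $\sup_{B_r}|u-P|\le r^2$, which is the conclusion.

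The only potentially delicate point is ensuring that the $\mathcal{C}^{2,\alpha}$ constant for $h$ is genuinely universal (independent of $u$, $f$, and $\mu$). This is guaranteed by Assumption A\ref{a4}, which provides a priori $\mathcal{C}^{2,\alpha}$ estimates for $F^{*}$-harmonic functions; hence all constants above depend only on dimension, $\lambda$, $\Lambda$, and the constants furnished by A\ref{a4}. Once this is in hand, the smallness of $r$ and $\delta$ is purely algebraic and the proposition follows.
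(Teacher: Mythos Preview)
Your proof is correct and follows essentially the same approach the paper indicates: a compactness argument (here encapsulated in the Approximation Lemma~\ref{approx}) produces an $F^{*}$-harmonic approximant $h$, and the a priori $\mathcal{C}^{2,\alpha}$ estimate from A\ref{a4} lets you take $P$ to be the second-order Taylor polynomial of $h$ at the origin. The paper omits details and simply cites the compactness arguments of \cite{MR3158810, SilvTeix}; your write-up is a clean way to fill these in.
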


\begin{proof} 
The proof is based on compactness arguments, similar to the one carried out in \cite{MR3158810} and in \cite{SilvTeix}. We omit the details.
\end{proof}

\noindent {\it Proof of Theorem \ref{p-BMO est}} \\
\noindent {\bf Step 1}\, (Existence of approximating quadratic polynomials). Let $u$ be a solution of $F(D^2u) = f$. An appropriate dilatation of $u$, $v(x) := \delta_1 u(\delta_2 x)$ verifies $\|v\|_\infty \le 1$ and solves
$$
	F_\mu (D^2v) = \tilde{f},
$$
with $\|\tilde{f}\|_{p-BMO(B_1)} + \mu \le \mu_0$. The choices for $\delta_1$ and $\delta_2$ depend only on $\|u\|_\infty$, $\|f\|_{p-BMO(B_1)}$ and universal data. We will prove the $p$-BMO estimate for $v$, which clearly gives the corresponding one for $u$. The first step is to show, by finite induction process, the existence of quadratic polynomials
$$
    {P}_k(x) := a_k + \mathbf{b}_k \cdot X + \dfrac{1}{2}x^t M_k x,
$$
verifying
\begin{equation}
    \label{P1} F^*(M_k) = \langle \tilde f \rangle_1, 
\end{equation}
\begin{equation}
    \label{P2} \sup\limits_{B_{r^k}} |v -  {P}_k| \le  r^{2k},  \\
\end{equation}
\begin{equation}
    \label{P3} |a_k - a_{k-1}| + r^{k-1} |\mathbf{b}_k - \mathbf{b}_{k-1}| +  r^{2(k-1)}  |M_k - M_{k-1}|
    \le C r^{2(k-1)}. 
\end{equation}
The constant $r$ appearing in \eqref{P2} and \eqref{P3} is the one from Proposition \ref{l:compactness}. We set  ${P}_0 = {P}_{-1}  = 0$, and the first step $k=0$ is
immediately satisfied. Suppose $k=0,1, \cdots, n$ have been checked, define the new function $v_m
\colon B_1 \to \mathbb{R}$ by
$$
    v_m(X) := \dfrac{(v -  {P}_m)(r^m X)}{ r^{2m}}.
$$
From induction hypothesis, $\|v_m\|_\infty \le 1$ and
$$
   \mu F \left (\mu^{-1} \left( D^2v_m + {M_i}  \right)\right  ) =   \tilde{f}(r^ix).
$$
From uniform convergence, the operators 
$$
	F_m(M) := F(M+M_m) \quad \text{and} \quad F^*_m(M) := F^*(M+M_m),
$$
are uniformly close. Also, since $F^*(M_m) = \langle f \rangle_1$,  from the smallness condition on $\|\tilde{f}\|_{p-BMO}$, the equation 
$$
	F_m^*(D^2 \xi)=  \langle f \rangle_1
$$
satisfies the same a priori $C^{2,\alpha}$-regularity estimate as the original $F^*$ and it is under the assumption of Proposition \ref{l:compactness}. Hence,  we can find a quadratic polynomial $\tilde{P}$ such that
\begin{equation}\label{proof LL eq i}
	\|v-\tilde{P}\|_{L^\infty(B_r)} \leq r^2.
\end{equation}
\smallskip

\noindent {\bf Step 2}\, (BMO estimate). Define $P_{i+1}(X) := P_i(X) + r^{2i}\tilde{P}(r^{-i} X)$ and rescale \eqref{proof LL eq i} back to obtain the $m+1$ step of induction. To conclude, for $\rho>0$, choose $m$ such that $0<r^{m+1} < \rho \le r^m$  and we apply Theorem \ref{w2ptheorem} to $v_m$ as to obtain
 \begin{align*}
	\frac{1}{\rho^d}\int_{B_\rho}\left|D^2v(y)-M_m\right|^pdy\,&\le \dfrac{1}{r} \cdot \dfrac{1}{r^m} \int_{B_{r^m}}\left|D^2v(y)-M_m\right|^p {dy} \\
	&=\,\int_{B_{r^m}}\left|D^2v_m(x)\right|^pdx\\
	&\,\leq C,
\end{align*}
and the proof of Theorem \ref{p-BMO est} is complete. \hfill $\square$

We conclude this section with a remark about the relation of inclusion involving the spaces $p$-BMO, for different values of $1<p< \infty$. Inded, for $1\,<\,p,\,q\,<\,\infty$, one always has
\begin{equation}\label{eqn pqbmo}
	p-BMO(B_1)\,=\,q-BMO(B_1)\;\;\;\;\mbox{and}\;\;\;\;\left\|u\right\|_{p-BMO(B_{1})}\,\propto\,\left\|u\right\|_{q-BMO(B_{1})}.
\end{equation}
This fact is a consequence of the John-Nirenberg inequality: assume $u\in BMO(B_1)$ and let $\rho>0$ be so that $Q_\rho\subset B_1$; then, there exist constants $C>0$ and $\alpha>0$, depending only on the dimension, for which
$$
	\left|\left\lbrace x\in Q_\rho\,:\,\left|u(x)-\left\langle u\right\rangle_\rho\right|>\lambda\right\rbrace\right|\,\leq\,Ce^{-\alpha\frac{\lambda}{\left\|u\right\|_{BMO(B_1)}}}\left|Q_\rho\right|.
$$
See \cite{JNBMO}. This inequality yields 
$$
	\int_{Q_\rho}\left|u(x)-\left\langle u\right\rangle_\rho\right|^pdx\leq p\int_0^\infty\lambda^{p-1}Ce^{-\alpha\frac{\lambda}{\left\|u\right\|		_{BMO(B_1)}}}d\lambda\left|Q_\rho\right|,
$$
which in turn, combined with 
$$
	p\int_0^\infty\lambda^{p-1}Ce^{-\alpha\frac{\lambda}{\left\|u\right\|_{BMO(B_1)}}}d\lambda\leq C\left\|u\right\|_{BMO(B_1)}^p,
$$
establishes \eqref{eqn pqbmo}. Hence the condition $p>d$ in Theorem \ref{p-BMO est} can be removed and the following a priori estimate: 
\[
	\left\|D^2u\right\|_{q-BMO(B_{1/2})}\,\leq\, C\left(\left\|u\right\|_{L^\infty(B_1)}\,+\,\left\|f\right\|_{p-BMO(B_1)}\right),
\]
holds for any $1<p, \, q<+\infty$, c.f. Theorem \ref{p-BMO est} and \cite[Lemma 3]{JNBMO}.
%%%%%%%%%%%%%%%%%%%%%%%%%%%%%%%%%%%%%%%%%
\section{$W^{2,p}$-density in the class of viscosity solutions} \label{sct density}  

In this section, we show that $W^{2,p}$ solutions are dense in the set of  $C^{0}$-viscosity solutions. We start by recalling the formalism of the class of {\it all solutions of all \linebreak % 
$(\lambda, \Lambda)-$uniform elliptic equations.} 

\begin{Definition}\label{S class}
Let $f$ be a function in $B_1$ and $0 < \lambda \le \Lambda $ be positive numbers. We denote by $\underbar{{S}}(\lambda, \Lambda, f)$ the set of all continuous functions $u$ in $B_1$ such that $\mathscr{P}^{+}_{\lambda, \Lambda}(D^2u) \ge f(x)$ in the viscosity sense. Analogously, $\bar{{S}}(\lambda, \Lambda, f)$ denotes the set of all continuous functions $u$ in $B_1$ such that $\mathscr{P}^{-}_{\lambda, \Lambda}(D^2u) \le f(x)$ in the viscosity sense. The class of all continuous viscosity solutions among all $(\lambda, \Lambda)$-elliptic equations is defined as
$$
	 {{S}}(\lambda, \Lambda, f) := \underbar{{S}}(\lambda, \Lambda, f) \cap \bar{{S}}(\lambda, \Lambda, f).
$$
\end{Definition}

Heuristically, a function $u$ belongs to $ {{S}}(\lambda, \Lambda, f)$ if it is a continuous viscosity solution of a variable coefficient equation
$$
	F(x, D^2u) = f(x),
$$
where $F$ is $(\lambda, \Lambda)-$uniform elliptic. A cornerstone result in the theory of fully nonlinear elliptic equations is that functions in  ${{S}}(\lambda, \Lambda, f)$ are {\it universally} H\"older continuous, \cite{ccbook}. It is known that H\"older continuous estimates are the best available for solutions to {\it measurable} coefficient equations. 

Our next result shows that any continuous viscosity solution can be approximated by a $W^{2,p}$-viscosity solution.

\begin{teo}\label{thm density} Let $f\in L^p(B_1)$, $F\colon B_1 \times \mathcal{S}(d) \to \mathbb{R}$ a $(\lambda, \Lambda)$-elliptic operator and $u$ continuous viscosity solution of $F(x,D^2u) = f(x)$ in $B_1$. Given $\delta>0$, there exists a sequence of functions $\{u_j\}_{j\ge 1} \subset  W^{2,p}_\text{loc}(B_1) \cap {{S}}(\lambda - \delta, \Lambda +\delta, f)$ that converges locally uniformly to $u$.
\end{teo}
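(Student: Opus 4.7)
The plan is to approximate $u$ by solutions $u_j$ of nearby uniformly elliptic problems $F_j(x,D^2 u_j)=f$, where the operators $F_j$ are chosen so that $F_j^{*}$ is a fixed convex operator with $C^{2,\alpha}$ a priori estimates, whence Theorem \ref{w2,p for var coeff} guarantees $u_j\in W^{2,p}_{\mathrm{loc}}(B_1)$. Fix a reference operator $G(M) := \tfrac{\lambda+\Lambda}{2}\,\mathrm{tr}(M)$, which is $(\lambda,\Lambda)$-elliptic, linear (hence convex), and enjoys $C^{\infty}$ interior estimates. For large $j$, define
\[
	F_j(x, M) \,:=\, G(M) + \phi_j(\|M\|)\bigl(F(x,M) - G(M)\bigr),
\]
where $\phi_j(t) := \phi\!\left(\tfrac{\log(1+t)}{\log j}\right)$ for a smooth cutoff $\phi \colon [0,\infty) \to [0,1]$ with $\phi\equiv 1$ on $[0,1]$ and $\phi\equiv 0$ on $[2,\infty)$.

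By construction, $F_j = F$ on $\{\|M\|\le j\}$ and $F_j = G$ on $\{\|M\|\ge j^{2}\}$, hence $F_j \to F$ locally uniformly on $B_1\times \mathcal{S}(d)$, while $F_j^{*}(x,M)\equiv G(M)$ has $\beta_{F_j^{*}}\equiv 0$ and trivially satisfies the hypotheses of Theorem \ref{w2,p for var coeff}. The logarithmic calibration of the cutoff is crucial for ellipticity: in the transition range $\|M\|\in[j,j^{2}]$, one has $|\phi_j'(t)|\lesssim (t\log j)^{-1}$, and since $(F-G)(x,M)=O(\|M\|)$, the error term $(\phi_j(\|M+N\|)-\phi_j(\|M\|))(F-G)$ that appears in the $M$-monotonicity check of $F_j$ is dominated by $\|N\|/\log j$. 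Consequently $F_j$ is $(\lambda-\epsilon_j,\Lambda+\epsilon_j)$-elliptic with $\epsilon_j=O(1/\log j)\to 0$; taking $j$ large enough that $\epsilon_j<\delta$, any viscosity solution of $F_j(x,D^2 v)=f$ automatically lies in $S(\lambda-\delta,\Lambda+\delta,f)$.

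Now solve the Dirichlet problem $F_j(x,D^2 u_j)=f$ in $B_1$ with $u_j=u$ on $\partial B_1$ by Perron's method in the $L^p$-viscosity framework. Theorem \ref{w2,p for var coeff} applies to $F_j$ and yields $u_j\in W^{2,p}_{\mathrm{loc}}(B_1)$, while the Alexandrov-Bakelman-Pucci estimate together with Krylov-Safonov provide uniform $L^{\infty}$ and H\"older bounds on $\{u_j\}$ on compact subsets of $B_1$. Arzel\`a-Ascoli extracts a locally uniform subsequential limit $u_\infty$ with $u_\infty=u$ on $\partial B_1$, and the standard stability result for viscosity solutions, applied through the locally uniform convergence $F_j\to F$, gives $F(x,D^2 u_\infty)=f$ in $B_1$. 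The comparison principle then forces $u_\infty\equiv u$, so the whole sequence $\{u_j\}$ converges locally uniformly to $u$.

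The principal technical obstacle is the construction of $F_j$: one must simultaneously truncate $F$ to coincide with a fixed convex operator at infinity, preserve ellipticity with an arbitrarily small loss, and retain locally uniform convergence to $F$. A naive linear-scale cutoff would produce an $O(1)$ perturbation $D\phi\cdot(F-G)$ in the derivative and destroy ellipticity; the logarithmic scale above is precisely calibrated so that $\|M\|\,|\phi_j'(\|M\|)|\to 0$, yielding the harmless $O(1/\log j)$ loss absorbed into the $\delta$-budget. A secondary concern, uniqueness for the limiting Dirichlet problem, is resolved by the standard comparison principle for $L^p$-viscosity solutions of uniformly elliptic equations.
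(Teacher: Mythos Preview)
Your strategy is sound and can be completed into a valid proof, but it differs from the paper's construction, and your ellipticity verification has a gap as written.

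The paper takes $F^j(x,M) := \max\{F(x,M),\, L_\delta(M) - C_j\}$, where $L_\delta = \mathscr{P}^{+}_{\lambda-\delta,\Lambda+\delta}$ is the Pucci maximal operator with the widened constants and $C_j = j(2\Lambda-\lambda+\delta)$. Because the maximum of two $(\lambda-\delta,\Lambda+\delta)$-elliptic operators is again $(\lambda-\delta,\Lambda+\delta)$-elliptic, the desired ellipticity comes for free---no cutoff, no calibration. Elementary cone comparisons then give $F^j = F$ on $\{\|M\|\le j\}$ and $F^j = L_\delta - C_j$ outside a ball of radius comparable to $C_j$, so $(F^j)^{*} = L_\delta$, convex with constant coefficients. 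The remainder (solving the Dirichlet problem for $F^j$, invoking Theorem~\ref{w2,p for var coeff}, stability and uniqueness) matches your outline.

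Your interpolated operators $F_j = G + \phi_j(\|\cdot\|)(F-G)$ also work, but the ellipticity bound needs more care than your sketch indicates. The claim that the error $(\Delta\phi_j)(F-G)$ is $O(\|N\|/\log j)$ relies on the mean-value point $t$ in $|\phi_j'(t)|\lesssim (t\log j)^{-1}$ being comparable to the point where $F-G$ is evaluated. But adding $N\ge 0$ can \emph{decrease} the norm: take $\|M\|\sim j^{1.9}$ and $\|M+N\|\sim j^{1.1}$, so that $|\Delta\phi_j|\sim 1$ while $|H(x,M)|\lesssim(\Lambda-\lambda)j^{1.9}\sim(\Lambda-\lambda)\|N\|$; the error in your decomposition is then of order $(\Lambda-\lambda)\|N\|$, not $\|N\|/\log j$. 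The repair is to use the symmetric decomposition with $H$ evaluated at whichever of $M$, $M+N$ has the \emph{smaller} norm, and then invoke $\log r/(r-1)\le 1$ (with $r$ the ratio of the two norms); this does yield the claimed $O(1/\log j)$ loss in all cases. With that correction your construction succeeds, though the paper's max trick sidesteps the issue entirely and delivers the exact constants $(\lambda-\delta,\Lambda+\delta)$ in one line.
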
 
\begin{proof}
We construct a sequence of operators $F_j \colon B_1 \times \mathcal{S}(d) \to \mathbb{R}$ as follows: given $\delta>0$, consider the convex (extremal) operator 
\[
	L_\delta(M)\,:=\,\left(\Lambda+\delta \right)\sum_{e_i>0}e_i\,+\, \left ( \lambda - \delta \right ) \sum_{e_i<0}e_i,
\]
where $e_i$ are the eigenvalues of the matrix $M\in\mathcal{S}(d)$. In the sequel we set
$$
	F^j(x,M) := \max \{ F(x,M),  \, L_\delta(M) - C_j\},
$$
where $C_j$ is a divergent sequence of positive numbers to be determined a posteriori. From $(\lambda, \Lambda)$-ellipticity of $F$, we verify that 
$$
	\begin{array}{lll}
		F(x, M) &\ge& \displaystyle \lambda  \sum_{e_i>0}e_i\,+\,\Lambda\sum_{e_i<0}e_i \\
		&\ge &  \displaystyle \lambda  \sum_{e_i>0}e_i\, - \,\Lambda  \|M\| \\ 
		&= & \displaystyle L_\delta(M) - (\Lambda+\delta -\lambda) \sum_{e_i>0}e_i\,-  \, (\lambda - \delta)\sum_{e_i<0}e_i -  \Lambda  \|M\| \\
		&\ge&  \displaystyle L_\delta(M) -  (2\Lambda - \lambda + \delta) \|M\| \\
		&\ge & \displaystyle L_\delta(M) - C_j,
	\end{array} 
$$
provided $\|M\| \le j$ and $C_j := j (2\Lambda - \lambda + \delta)$. This shows that
$$
	F^j = F \text{ in } B_j \subset \mathcal{S}(d).
$$
We now compute the recession function of $F^j$. For that, we look at the tangential path
$$
	 F^j_\mu (x, M) := \mu F^j (x, \mu^{-1}M) = \max \{ F_\mu(x, M), L_\delta(M) - \mu C_j\}.
$$
Since $F_\mu$ is $(\lambda, \Lambda)$-elliptic for all $\mu$, we can estimate
$$
	\begin{array}{lll}
		F_\mu(x, M) &\le& \displaystyle \Lambda  \sum_{e_i>0}e_i\,+\,\lambda\sum_{e_i<0}e_i \\
		& = &  \displaystyle L_\delta(M) - \delta  \sum_{e_i>0}e_i\, + \,\delta\sum_{e_i<0}e_i  \\ 
		& \le & \displaystyle L_\delta(M) - \delta \|M\| \\
		&\le&  \displaystyle L_\delta(M) - \mu C_j,
	\end{array} 
$$
provided $\|M\| \ge \frac{\mu C_j}{\delta}$. In particular, we conclude that $F^j(x,M) = L_\delta(M) - C_j$ outside the ball of radius $\sim C_j$ and that $\left (F^j\right )^{*} = L_\delta$ -- a convex operator, with a priori $C^{2,\alpha}$ estimates due to Evans and Krylov Theorem. 
\begin{figure}[h!]\label{fig density}
{
\begin{pspicture}(0,-4.3091993)(12.99789,4.339199)
\definecolor{color192b}{rgb}{0.8,0.8,0.8}
\definecolor{color985}{rgb}{0.6,0.6,0.6}
\psline[linewidth=0.02cm,arrowsize=0.05291667cm 2.0,arrowlength=1.4,arrowinset=0.4]{<-}(5.303632,3.5808008)(5.303632,-4.299199)
\psline[linewidth=0.02cm,arrowsize=0.05291667cm 2.0,arrowlength=1.4,arrowinset=0.4]{->}(0.0,-0.33919922)(10.700074,-0.33919922)
\psdots[dotsize=0.12](5.3,-1.7791992)
\psline[linewidth=0.04cm](5.32,-1.7991992)(8.44,3.8408008)
\psline[linewidth=0.04cm](5.28,-1.7991992)(0.56,-2.5791993)
\psline[linewidth=0.02cm,linestyle=dotted,dotsep=0.16cm](6.54,0.32080078)(6.54,-0.35919923)
\usefont{T1}{ptm}{m}{n}
\rput(5.661455,-1.9941993){$-C_j$}
\usefont{T1}{ptm}{m}{n}
\rput(3.8714552,-0.11419922){$-j$}
\usefont{T1}{ptm}{m}{n}
\rput(6.491455,-0.5741992){$j$}
\usefont{T1}{ptm}{m}{n}
\rput(8.761456,4.1458006){$L_\delta - C_j$}
\psline[linewidth=0.02cm,linestyle=dotted,dotsep=0.16cm](1.5,-2.4191992)(1.52,-0.33919922)
\usefont{T1}{ptm}{m}{n}
\rput(1.421455,-0.13419922){$\sim -C_j$}
\usefont{T1}{ptm}{m}{n}
\rput(7.941455,-0.5541992){$\sim C_j$}
\rput{-230.0}(12.6404,-3.519572){\pstriangle[linewidth=0.002,linecolor=color192b,dimen=outer,fillstyle=solid,fillcolor=color192b](7.1408014,-1.2272713)(1.9485885,4.829286)}
\rput{-410.0}(2.6867437,1.9324062){\pstriangle[linewidth=0.002,linecolor=color985,dimen=outer,fillstyle=solid,fillcolor=color192b](3.415401,-4.3631196)(1.9693696,4.896905)}
\psline[linewidth=0.02cm](5.32,-1.7991992)(0.6,-2.5791993)
\psline[linewidth=0.02cm,linestyle=dotted,dotsep=0.16cm](3.98,-0.35919923)(3.94,-2.039199)
\psline[linewidth=0.02cm](5.32,-1.7991992)(8.4,3.7608008)
\psline[linewidth=0.02cm,linestyle=dotted,dotsep=0.16cm](8.04,3.0208008)(7.98,-0.2791992)
\usefont{T1}{ptm}{m}{n}
\rput(9.989453,0.96580076){$(\lambda, \Lambda)$-cone of ellipticity}
\psdots[dotsize=0.16](5.3,-0.33919922)
\usefont{T1}{ptm}{m}{n}
\rput(5.3114552,3.7458007){$\mathbb{R}$}
\usefont{T1}{ptm}{m}{n}
\rput(10.751455,-0.5541992){$\mathcal{S}(d)$}
\psbezier[linewidth=0.02](5.32,-0.33919922)(6.011078,0.13768226)(5.959204,0.6094189)(6.4180017,0.7138858)(6.876799,0.8183527)(7.1050534,0.5948251)(7.313234,0.7933435)(7.5214143,0.9918619)(7.0734425,1.939706)(7.4452457,2.2795284)(7.817049,2.619351)(9.02,0.06080078)(8.8975725,2.7408009)
\psbezier[linewidth=0.02](5.26,-0.3791992)(4.6236787,-0.80633855)(4.655462,-1.2980752)(4.1974635,-1.4025421)(3.7394657,-1.507009)(3.511609,-1.2834814)(3.3037918,-1.4819998)(3.0959742,-1.6805182)(3.543165,-2.6283622)(3.1720097,-2.9681847)(2.8008544,-3.3080072)(1.6,-0.7494571)(1.7222143,-3.4294572)
\usefont{T1}{ptm}{m}{n}
\rput(8.961455,2.9458008){$F$}
\psline[linewidth=0.01cm,linestyle=dashed,dash=0.16cm 0.16cm](5.34,-1.7791992)(9.78,-1.0791992)
\psline[linewidth=0.01cm,linestyle=dashed,dash=0.16cm 0.16cm](5.34,-1.7591993)(3.92,-4.219199)
\end{pspicture} 
}
%\vspace{-2cm}
\caption{%
This figure illustrates the construction of the operator $F^j$: the graph of an arbitrary $(\lambda,\Lambda)$-elliptic operator $F$ lies inside the $(\lambda,\Lambda)$-cone of ellipticity. By tilting a little bit the opening of the cone and placing its vertex at a very negative axis point, the boundary of the new cone, which contains the graph of $L_\delta - C_j$, stays below the original $(\lambda,\Lambda)$-cone within $B_j$, and above  in the complement of the ball of radius $\sim C_j$. 
}%
\end{figure}
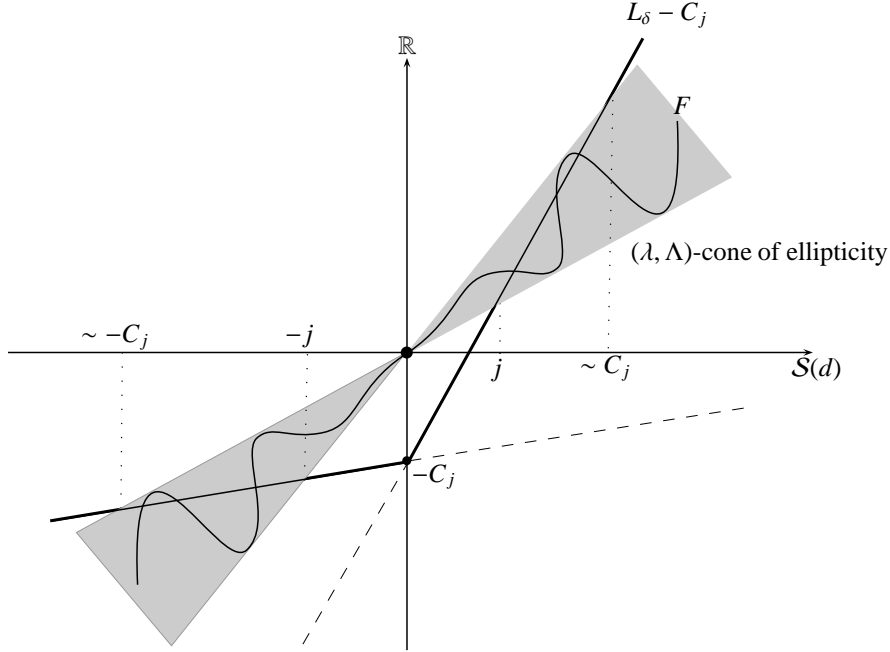

Thus, from Theorem \ref{w2,p for var coeff}, for each $j \ge 1$ fixed, the constructed operator $F^j$ has a priori $W^{2,p}$ interior estimates. That is, there exist constants $\kappa_j \ge 1$ such that, for any viscosity solution $v$ of 
$$
	F^j(x, D^2v) = g(x),
$$
where holds
$$
	\|v\|_{W^{2,p}(B_{1/2})} \le \kappa_j \left ( \|v\|_{L^\infty(B_1)} + \|g\|_{L^p(B_1)} \right ).
$$
Finally, we construct $u_j$ to be the viscosity solution of the Dirichlet problem
$$
	\left \{
		\begin{array}{rlll}
			F^j(x,D^2u_j) &=& f(x) &\text{ in } B_1 \\
			u_j &=& u &\text{ on } \partial B_1.
		\end{array}
	\right.
$$
From the discussion above, each $u_j$ is locally in $W^{2,p}$. Also, since $F^j = F$ within $B_j$, it follows by stability of viscosity solutions and uniqueness of the Dirichlet problem that, up to a subsequence, $u_j \to u$ locally in the $C^{0, \, \alpha}$-topology. The proof of the Theorem is complete.
\end{proof}

We notice that for constant coefficient equations, $F(D^2u) = f(x)$, Theorem \ref{thm density} yields a sequence of $W^{2,p}$ approximating functions that converge in the $C^{1,\alpha}_\text{loc}$-topology. One should compare Theorem \ref{thm density} with the result from \cite{JKS} and also from \cite{Krylov2012, Krylov2013}. 

This corpus of results has an important consequence for the theory of fully nonlinear elliptic PDEs. In fact, an effective tool in the study of viscosity solutions of homogeneous fully nonlinear equations is the mechanism devised by R. Jensen \cite{jensen}, known as inf-sup convolutions, or upper/lower $\epsilon$-envelope. The regularizing effects of this procedure gives semi-concave sub-solutions or semi-convex super-solutions. It now follows from Theorem \ref{thm density} that, when aiming to establish a property closed under uniform limits, one can assume, with no loss, that solutions of non-homogeneous equations, $F(x,D^2u) = f(x)$ are locally of class $W^{2,p}$; setting up a more comfortable starting point for further developments of the theory.

%\bibliography{bib_june_2015}
%\bibliographystyle{plain}

\bigskip

%\vspace{1cm}

\noindent\textsc{Edgard A. Pimentel}\\
Department of Mathematics\\
Universidade Federal de S\~ao Carlos\\
13.560 S\~ao Carlos-SP, Brazil\\
\noindent\texttt{edgard@dm.ufscar.br}
\bigskip

\noindent\textsc{Eduardo V. Teixeira}\\
Universidade Federal do Cear\'a\\
Campus do Pici, Bloco 914,\\
60.455-760 Fortaleza-CE, Brazil.\\
\noindent\texttt{teixeira@mat.ufc.br}

\end{document}